\documentclass[11pt]{article}
\usepackage{graphicx}
\usepackage{epstopdf}
\usepackage{subcaption}
\usepackage{enumerate}
\usepackage{amssymb,a4wide,latexsym,makeidx,epsfig,fleqn}
\usepackage{amsthm}
\usepackage{amsmath}
\allowdisplaybreaks[4]
\usepackage{enumerate}
\usepackage{graphicx}
\usepackage{color}
\usepackage{epstopdf}
\newtheorem{theorem}{Theorem}[section]

\newtheorem{lemma}[theorem]{Lemma}

\begin{document}
	\textwidth 150mm \textheight 230mm
	\setlength{\topmargin}{-15mm}
	\title{Some sufficient conditions for a graph with minimum degree to be $k$-critical with respect to $[1,b]$-odd factors
		\footnote{This work is supported by the National Natural Science Foundations of China (Nos. 12371348, 12201258), the Postgraduate Research \& Practice Innovation Program of Jiangsu Normal University (No. 2025XKT0633).}}
	\author{{ Jiaxu Zhong, Yong Lu\footnote{Corresponding author}}\\
		{\small  School of Mathematics and Statistics, Jiangsu Normal University,}\\ {\small  Xuzhou, Jiangsu 221116,
			People's Republic
			of China.}\\
		{\small E-mails: JXZhong@163.com, luyong@jsnu.edu.cn}}
	
	\date{}
	\maketitle
	\begin{center}
		\begin{minipage}{120mm}
			\vskip 0.3cm
			\begin{center}
				{\small {\bf Abstract}}
			\end{center}
			{\small
				A graph $G$ is $k$-factor-critical if $G-S$ has a perfect matching for every subset $S \subseteq V(G)$ with $|S|=k$. A spanning subgraph $H$ of $G$ is called a $[1,b]$-odd factor if $b \equiv 1 \pmod{2}$ and $d_{H}(v) \in\left\lbrace 1, 3, \ldots, b\right\rbrace$ for every $v\in V(G),$ where $d_{H}(v)$ denotes the degree of vertex $v$ in $H$. Moreover, $G$ is said to be $k$-critical with respect to $[1,b]$-odd factors if $G-X$ contains a $[1,b]$-odd factor for every subset $X \subseteq V(G)$ with $|X|=k$. In this paper, we provide some sufficient conditions based on the distance spectral radius and the distance signless Laplacian spectral radius for a graph with minimum degree to be $k$-critical with respect to $[1,b]$-odd factors.

				\vskip 0.1in \noindent {\bf Keywords}:\ Distance spectral radius; Distance signless Laplacian spectral radius; Minimum degree; $k$-critical; $[1,b]$-odd factors. \vskip
				0.1in \noindent {\bf AMS Subject Classification (2020)}: \ 05C35; 05C50. }
		\end{minipage}
	\end{center}

	\section{Introduction }
	\hspace{1.3em}
	Let $G=(V(G),E(G))$ be a finite, undirected and simple graph, where $V(G)$ is the vertex set and $E(G)$ is the edge set. We denote the \emph{order} and \emph{size} of $G$ by $|V(G)|=n$ and $|E(G)|=e(G)$, respectively. Let $d(v)$ be the \emph{degree} of vertex $v\in V(G)$, and $\delta(G)$ be the \emph{minimum degree} (or simply $\delta$) of $G$.
	For a vertex subset $S$ of $G$, we denote by $G-S$ and $G[S]$ the subgraph of $G$ obtained from $G$ by deleting the vertices in $S$ together with their incident edges and the subgraph of $G$ induced by $S$, respectively. The number of components and the number of odd components  of $G$ are denoted by $c(G)$ and $o(G)$. Let $K_{n}$ denote the \emph{complete graph} of order $n$.
	For two vertex-disjoint graphs $G_{1}$ and $G_{2}$, we use $G_{1}\cup G_{2}$ to denote the \emph{disjoint union} of $G_{1}$ and $G_{2}$.
	The \emph{join} $G_{1}\vee G_{2}$ is the graph obtained from $G_{1} \cup G_{2}$ by adding all possible edges between $V(G_{1})$ and $V(G_{2})$. A graph $G$ of order $n$ is called \emph{$k$-connected} if $n>k$ and $G-X$ is connected for every set $X\subseteq V(G)$ with $|X|<k$, where $k$ is a positive integer.
	
	For a simple graph $G$ of order $n$, its \emph{adjacency matrix} $A(G)=(a_{ij})_{n\times n}$ is a symmetric matrix with $a_{ij}=1$ if and only if vertices $v_{i}$ and $v_{j}$ are adjacent, and $a_{ij}=0$ otherwise. The \emph{spectral radius} $\rho(G)$ of $G$ is the largest eigenvalue of $A(G)$. The \emph{signless Laplacian matrix} of $G$ is defined as $Q(G)=\hat{D}+A(G)$, where $\hat{D}$ denotes the \emph{diagonal degree matrix} of $G$. The \emph{signless Laplacian spectral radius} $q(G)$ is the largest eigenvalue of $Q(G)$. For vertices $v_{i}, v_{j} \in V(G)$, the \emph{distance} between $v_{i}$ and $v_{j}$, denoted $d_{G}(v_{i}, v_{j})$ (or simply $d_{ij}$), is the length of the shortest path connecting $v_{i}$ and $v_{j}$. The \emph{distance matrix} $D(G)=(d_{ij})_{n\times n}$ of $G$ is a symmetric matrix whose $(i,j)$-entry is $d_{ij}$, and $\mu_{1}(G)\geq\mu_{2}(G)\geq\cdots\geq\mu_{n}(G)$ are the eigenvalues of $D(G)$. The largest eigenvalue $\mu_{1}(G)$ is called the \emph{distance spectral radius} of $G$.
	The \emph{transmission} $Tr(v)$ of a vertex $v \in V(G)$ is the sum of distances from $v$ to all vertices in $G$.
	A graph is \emph{$k$-transmission-regular} (or simply \emph{transmission-regular}) if the rows of its distance matrix each have the constant sum $k$.
	Aouchiche and Hansen \cite{AHP} introduced the \emph{distance signless Laplacian matrix}  $Q_{D}(G)=D(G)+Tr(G)$ of $G$, where $Tr(G)$ is the diagonal matrix whose diagonal entries are the vertex transmissions in $G$. Let $\eta_{1}(G)\geq\eta_{2}(G)\geq\cdots\geq\eta_{n}(G)$ be eigenvalues of $Q_{D}(G)$.
	The largest eigenvalue $\eta_{1}(G)$ is called the \emph{distance signless Laplacian spectral radius} of $G$.

	A \emph{matching} in a graph $G$ is a subset $M$ of $E(G)$ such that no vertex of $G$ is incident with more than one edge in $M$. A \emph{perfect matching} in $G$ is a matching such that every vertex of $G$ is incident with precisely one edge in it. The problem of determining when a general graph has a perfect matching is a classic topic in graph theory. In 1917, Frobenius \cite{CL} initiated the characterization of perfect matchings, who showed that an $n$-vertex bipartite graph has a perfect matching if and only if the cardinality of each vertex cover is at least $\frac{n}{2}$. In 1947, Tutte \cite{HDZ} published his celebrated theorem, which provided a necessary and sufficient condition for a general graph to have a perfect matching. Anderson \cite{AH} investigated the existence of perfect matchings in graphs using neighborhood conditions. Since then, many researchers have been interested in finding sufficient conditions to guarantee the existence of a perfect matching in a graph using various graph invariants, see \cite{EJK, FLL, LFS}.
	
	On the other hand, researchers have been interested in studying graphs where every subgraph of a given order has a perfect matching. Favaron \cite{C} and Yu \cite{MK} independently introduced the concept of $k$-factor-critical graphs: a graph $G$ of order $n$ is \emph{$k$-factor-critical} (where $0 \leq k < n$) if removing any $k$ vertices leaves a graph with a perfect matching. Clearly, if $G$ is a $k$-factor-critical graph with $n$ vertices, then $k$ and $n$ have the same parity. Fan and Lin \cite{BY} gave an adjacency spectral condition for a connected graph with minimum degree to be $k$-factor-critical. Zheng et al. \cite{ZLLW} established three sufficient conditions based on the size, the signless Laplacian spectral radius and the distance spectral radius of a connected graph to be $k$-factor-critical. Zhou and Zhang \cite{ZZL} derived a condition on the signless Laplacian spectral radius for the existence of $2k$-factor-critical graphs. More results on the relationships between the spectral radius and spanning subgraphs can be found in \cite{XZL, ZC, ZZL, ZSZ}.

	Let $g$ and $f$ be two positive integer-valued functions defined on $V(G)$ such that $g(x)\leq f(x)$ holds for any $x\in V(G)$. Then a spanning subgraph $F$ of $G$ is called a \emph{$(g,f)$-factor} if $g(v)\leq d_{F}(v)\leq f(v)$ holds for any $v\in V(G)$. Let $a$ and $b$ be two positive integers with $a\leq b$. A $(g,f)$-factor is called an \emph{$[a,b]$-factor} if $g(v)=a$ and $f(v)=b$ for any $v\in V(G)$. A spanning subgraph $F$ of $G$ is called a \emph{$(1,f)$-odd factor} if $f(v) \equiv 1 \pmod{2}$ and $d_{F}(v)\in\left\lbrace 1,3,\ldots,f(v)\right\rbrace$ for every $v\in V(G)$. A $(1,f)$-odd factor is called a \emph{$[1,b]$-odd factor} if $f(v)=b$ (an odd integer) for every $v\in V(G)$. In fact, a perfect matching is a special $[1,b]$-odd factor when $b=1$. Cui and Kano \cite{B} provided a sufficient condition for a graph to admit a $\{1, 3, \ldots, 2n-1\}$-factor. Fan, Lin and Lu \cite{BH} provided some spectral conditions for the existence of a $[1,b]$-odd factor in a connected graph with minimum degree, as well as for the existence of an $[a,b]$-factor. Zhou and Liu \cite{ZW} established a spectral radius condition for a connected graph to have a $[1,b]$-odd factor, and further derived three lower bounds on the edge number of even-order graphs that guarantee the existence of such a factor. Additional results on graph factors can be found in \cite{CLX, CFL, XZL}.

	For an integer $k\geq1$, a graph $G$ of order $n\geq k+2$ is said to be \emph{$k$-critical with respect to $(1,f)$-odd factors} if for any subset $X\subseteq V(G)$ with $|X|=k$, the graph $G-X$ contains a $(1,f)$-odd factor. Similarly, $G$ is \emph{$k$-critical with respect to $[1,b]$-odd factors} if $G-X$ contains a $[1,b]$-odd factor for every $X\subseteq V(G)$ with $|X|=k$.

	Zhou \cite{YYSX} established conditions on the size and the spectral radius for a graph with a given minimum degree to be $k$-critical with respect to $[1,b]$-odd factors. Wang et al. \cite{LL} presented a sufficient condition based on the signless Laplacian spectral radius to guarantee that a graph with a given minimum degree is $k$-critical with respect to $[1,b]$-odd factors.
	Wang and Zhang \cite{JL} considered the above problem in a connected graph of order $n$ in terms of the distance spectral radius.
	
	First, we restate two sufficient conditions from Zhou \cite{YYSX} that ensure a graph $G$ with a given minimum degree is $k$-critical with respect to $[1,b]$-odd factors, based on its size and the spectral radius of $G$.
	
	\noindent\begin{theorem}\label{th:1.1.} \cite{YYSX}
		Let $b$, $k$ and $n$ be three positive integers with $n\equiv k\pmod{2}$ and $b\equiv1\pmod{2}$, and let $G$ be a $(k+1)$-connected graph of order
		\[
		n\geq \max \left\lbrace
		\begin{array}{cc}
			\frac{b^2k^2-2b^2k\delta -4b^2k-bk+b^2\delta^2+4b^2\delta+7b\delta+b^2+8b+1}{6b},~(b+5)\delta-(b+4)k-b+1+\frac{5}{b}
		\end{array}
		\right\}
		\]
		with minimum degree $\delta$. If
		$$e(G)\geq e(K_{\delta}\vee(K_{n-(b+1)\delta+bk-1}\cup(b\delta-bk+1)K_{1})),$$ then $G$ is $k$-critical with respect to $[1,b]$-odd factors, unless
		$$G=K_{\delta}\vee(K_{n-(b+1)\delta+bk-1}\cup(b\delta-bk+1)K_{1}).$$
	\end{theorem}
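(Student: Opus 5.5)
The plan is a contrapositive argument. We use the structural criterion for $[1,b]$-odd factors (Amahashi's theorem): a graph $H$ has a $[1,b]$-odd factor if and only if $o(H-S)\le b|S|$ for every $S\subseteq V(H)$. Suppose $G$ is not $k$-critical with respect to $[1,b]$-odd factors. Then there is $X$ with $|X|=k$ and $S\subseteq V(G)-X$ such that $o(G-X-S)>b|S|$. Let $|S|=s$ and put $T=X\cup S$, so $|T|=s+k$. Since $n\equiv k\pmod 2$, the order $n-k$ of $G-X$ is even, so $o(G-X-S)\equiv |S|\equiv b|S|\pmod 2$. Hence $o(G-T)\ge bs+2$.

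We then reduce to an extremal candidate. Adding edges cannot increase $o(G-T)$ if we only add edges inside $T$, inside each odd component, and between $T$ and everything. So $e(G)\le e(G')$, where $G'=K_{s+k}\vee(K_{n_1}\cup K_{n_2}\cup\cdots\cup K_{n_{bs+2}})$ with odd $n_i$ and $\sum n_i=n-s-k$. Moving vertices into the largest clique keeps the parities and increases the edge count, so $G'\le K_{s+k}\vee(K_{n-s-k-bs-1}\cup(bs+1)K_1)$ in edges. We also need a lower bound on $s$. If $s=0$ we would get $o(G-X)\ge 2$, which contradicts the $(k+1)$-connectivity. So $s\ge1$, and $G-T$ is disconnected, which forces $s+k\ge k+1$.

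The minimum degree is used as follows. If some component of $G-T$ is a single vertex, it has degree at most $s+k$, so $s+k\ge\delta$. We split into two cases.

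\emph{Case 1: $s+k\ge\delta$.} Write $t=s$ with $t\ge \delta-k$, and set $f(t)=e(K_{t+k}\vee(K_{n-(b+1)t-k-1}\cup(bt+1)K_1))$. This is a quadratic in $t$ with positive leading coefficient $\tfrac{(b+1)^2}{2}$ (roughly). Hence $f$ attains its maximum over the feasible interval $\delta-k\le t\le (n-k-1)/(b+1)$ at an endpoint. Comparing $f$ at the two endpoints uses exactly the first term in the lower bound on $n$. The maximum is at $t=\delta-k$, which gives the displayed graph $K_{\delta}\vee(K_{n-(b+1)\delta+bk-1}\cup(b\delta-bk+1)K_1)$. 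Equality throughout then forces $G$ to be that graph.

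\emph{Case 2: $s+k<\delta$.} Here every component of $G-T$ has at least $\delta-s-k+1$ vertices. The optimal configuration is $K_{s+k}\vee(K_{n-s-k-(bs+1)(\delta-s-k+1)}\cup(bs+1)K_{\delta-s-k+1})$. We bound its size by a function of $s$ and compare with $e$ of the extremal graph. Using the second term $(b+5)\delta-(b+4)k-b+1+5/b$ in the bound on $n$, the difference should be strictly negative for $1\le s\le\delta-k-1$, again by convexity and endpoint checks.

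The main obstacle is the endpoint and convexity computations, especially in Case 2. There the expression is cubic-ish in $s$ because the small components have size depending on $s$. I would first treat the case $bs+1$ small components of size $\delta-s-k+1$, then verify the inequality at $s=1$ and at $s=\delta-k-1$, and control the interior by a second-difference argument. It is precisely there that the constants in the $n$ bound must be matched.
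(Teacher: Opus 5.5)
The paper does not prove this statement; it is quoted from Zhou \cite{YYSX}. Your skeleton is the same template the paper uses for Theorems~\ref{th:1.5.} and~\ref{th:1.6.}: Amahashi plus a choice of $X$ (i.e.\ Lemma~\ref{le:2.1.} with $f\equiv b$), the parity upgrade to $bs+2$ odd components, the split of $|T|=s+k$ against $\delta$, and the two extremal configurations. The gap is in the quantitative steps, where you attach the two parts of the $n$-bound to the wrong cases.

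\textbf{Case 1.} Write $H_\delta:=K_{\delta}\vee(K_{n-(b+1)\delta+bk-1}\cup(b\delta-bk+1)K_1)$. Comparing $f(\delta-k)=e(H_\delta)$ with $f$ at the right endpoint is a quadratic inequality in $n$ with leading coefficient $b^2/(2(b+1)^2)$. That endpoint is $(n-k-2)/(b+1)$, not $(n-k-1)/(b+1)$, because the large odd component is nonempty. The larger root of the quadratic is linear in $\delta$, roughly $(b+3+2/b)(\delta-k)$. That is what the second term $(b+5)\delta-(b+4)k-b+1+5/b$ covers; the quadratic first term cannot replace it.

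Take $b=k=1$, $\delta=21$, $n=107$. The first term equals $106$, yet $e(K_{53}\vee 54K_1)=4240>4096=e(H_{21})$; the exact threshold for this endpoint comparison is $n>125$, against a second term of $126$. Deleting $32$ edges at one vertex of the independent set gives a $2$-connected graph with minimum degree $21$ and $4208\ge 4096$ edges. It is not $1$-critical with respect to $[1,1]$-odd factors: removing the $53$ clique vertices leaves $54$ isolated vertices. So Case 1 genuinely needs the second term.

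\textbf{Case 2.} A second-difference argument over all of $1\le s\le\delta-k-1$ does not go through. Put $M(s)=(bs+1)(\delta-s-k+1)$. Near the vertex of this parabola, the second derivative of $\binom{n-M(s)}{2}$ is about $2b(n-M(s))$, while the remaining cubic part is concave. Convexity would therefore need $n\gtrsim b(\delta-k)^2/4$. Neither term of the hypothesis guarantees this: the first is about $b(\delta-k)^2/6$, and the linear second term is irrelevant here.

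The missing idea is that the lower bound $\delta-s-k+1$ holds for every component of $G-T$, including the large one. Hence $n\ge s+k+(bs+2)(\delta-s-k+1)=M(s)+\delta+1$. Set $M^*=b(\delta-k)+1$ and $d=M(s)-M^*=(\delta-k-s)(b(s-1)+1)\ge1$. The graph $H_s=K_{s+k}\vee(K_{n-s-k-M(s)}\cup(bs+1)K_{\delta-s-k+1})$ has $e(H_s)=\binom{n-M(s)}{2}+\tfrac12M(s)(\delta+s+k)$, and $e(H_\delta)=\binom{n-M^*}{2}+M^*\delta$. Therefore
\[
e(H_\delta)-e(H_s)=\tfrac{d}{2}\bigl(2n-M(s)-M^*-1\bigr)+M^*\delta-\tfrac12 M(s)(\delta+s+k)\ge\tfrac12\bigl(d^2+d(\delta-s-k+1)+M^*(\delta-s-k)\bigr)>0 .
\]
This holds pointwise in $s$, with no size hypothesis on $n$ at all; no convexity or endpoint checks are needed.

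Finally, say explicitly that even components, and pairs of surplus odd components, are merged into an odd component. Since $o(G-T)\equiv bs \pmod 2$, this leaves exactly $bs+2$ odd components, each still of size at least $\delta-s-k+1$.
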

	For clarity, the exceptional graph in Theorem \ref{th:1.1.} is shown in Figure 1.
	\begin{figure}[htbp]
		\centering
	    \includegraphics[scale=0.9]{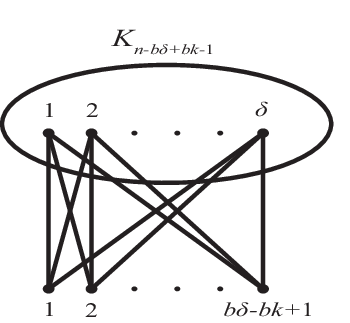}
		\caption{$K_{\delta}\vee(K_{n-(b+1)\delta+bk-1}\cup(b\delta-bk+1)K_{1})$.}
	\end{figure}
	
	\noindent\begin{theorem}\label{th:1.2.} \cite{YYSX}
		Let $b$, $k$ and $n$ be three positive integers with $n\equiv k\pmod{2}$ and $b\equiv1\pmod{2}$, and let $G$ be a $(k+1)$-connected graph of order $n\geq \max \{b\delta^2-bk, ~(2b+3)\delta-bk+1\}$ with minimum degree $\delta$. If
		$$\rho(G)\geq \rho(K_{\delta}\vee(K_{n-(b+1)\delta+bk-1}\cup(b\delta-bk+1)K_{1})),$$
		then $G$ is $k$-critical with respect to $[1,b]$-odd factors, unless
		$$G= K_{\delta}\vee(K_{n-(b+1)\delta+bk-1}\cup(b\delta-bk+1)K_{1}).$$
	\end{theorem}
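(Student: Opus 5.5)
The plan is to argue by contradiction, using the standard criterion for $(1,f)$-odd factors (Amahashi / Cui--Kano). A graph $H$ of even order has a $[1,b]$-odd factor if and only if $o(H-S)\le b|S|$ for every $S\subseteq V(H)$. Note that $|V(G-X)|=n-k$ is even because $n\equiv k \pmod 2$.

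Suppose $G$ is not $k$-critical with respect to $[1,b]$-odd factors. Then there are $X$ with $|X|=k$ and $S\subseteq V(G)\setminus X$ with $o(G-X-S)>b|S|$. Parity forces $o(G-X-S)\ge b|S|+2$ when $S\neq\emptyset$. Put $T=X\cup S$ and $s=|S|$, so $|T|=s+k$ and $G-T$ has at least $bs+2$ odd components. Since $G$ is $(k+1)$-connected, $G-X$ is connected, so $S\neq\emptyset$ and $s\ge1$. By edge maximality, $G$ is a spanning subgraph of
\[
K_{s+k}\vee(K_{n_1}\cup K_{n_2}\cup\cdots\cup K_{n_{bs+2}}),
\]
with $n_i$ odd and $\sum n_i=n-s-k$. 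Since $\rho$ is monotone under adding edges, it suffices to show that this graph has spectral radius at most that of the extremal graph, with equality only for the extremal graph.

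I split into two cases according to $s+k$ versus $\delta$. Write $t=s+k$.

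\emph{Case $t\ge\delta$.} First apply the standard result that merging component sizes into a single large clique increases $\rho$. This gives the bound $\rho\le\rho(K_t\vee(K_{n-t-bs-1}\cup(bs+1)K_1))$. Then compare against the graph with $t=\delta$, i.e.\ $s=\delta-k$, whose clique has $n-\delta-b(\delta-k)-1=n-(b+1)\delta+bk-1$ vertices: exactly the extremal graph.

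For this comparison I use the quotient (equitable partition) matrix, a $3\times3$ matrix on parts of sizes $t$, $n-t-bs-1$, $bs+1$. Let $\varphi_t(x)$ be its characteristic polynomial. I evaluate $\varphi_t-\varphi_\delta$ at $\rho_\delta$ and use a lower bound such as $\rho_\delta>n-(b+1)\delta+bk-2$ to show $\rho_t<\rho_\delta$ for $t>\delta$. The hypothesis $n\ge\max\{b\delta^2-bk,(2b+3)\delta-bk+1\}$ is exactly what should make the resulting polynomial in $t$ sign-definite over the range $\delta<t\le (n-k)/(b+1)$ or so. I would likely need to treat $t$ at the upper end separately, where the large clique is small, by a cruder bound such as $\rho\le\Delta$ or an edge-count bound $\rho\le\sqrt{2e}$.

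\emph{Case $t<\delta$.} The singleton components are no longer admissible. Each odd component must contain a vertex of degree at least $\delta$, and its neighbours lie within the component and $T$. Hence each small component has size at least $\delta-t+1$, so I may take $n_i\ge \delta-t+1$ for $i\le bs+1$. Merging subject to this lower bound yields
\[
K_t\vee(K_{n-t-(bs+1)(\delta-t+1)}\cup(bs+1)K_{\delta-t+1}).
\]
Its spectral radius should be compared with the extremal graph using the Rayleigh quotient with the Perron vector of the extremal graph. Alternatively, bound $\rho$ of this graph above by its largest degree plus a correction, and below bound $\rho$ of the extremal graph by $n-(b+1)\delta+bk-2$ (it contains that clique). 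The size condition $n\ge b\delta^2-bk$ is designed for this: the $bs+1$ blocks use about $b\delta^2$ vertices, so the large clique of the competitor is strictly smaller.

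The main obstacle is the polynomial comparison in the first case. There the difference of characteristic polynomials is cubic in $x$ with coefficients depending on $t,s,b,k,n$, and extracting a clean sign requires carefully using both lower bounds on $n$. For that step I would first try factoring $\varphi_t(x)-\varphi_\delta(x)$ as $(t-\delta)$ times a quadratic in $x$, and verify that the quadratic is positive for $x\ge n-(b+1)\delta+bk-2$.
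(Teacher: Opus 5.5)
\textbf{A gap in the quantitative comparisons.} The paper does not prove this statement; it only quotes it from Zhou. Its own proofs of Theorems 1.5 and 1.6 follow the skeleton you describe:
\begin{itemize}
\item the odd-factor criterion plus parity, giving $bs+2$ odd components of $G-T$;
\item $(k+1)$-connectivity forcing $s\ge 1$;
\item embedding $G$ into $K_t\vee(K_{n_1}\cup\cdots\cup K_{n_{bs+2}})$;
\item the split $t\ge\delta$ versus $t<\delta$, with component sizes at least $\delta-t+1$ in the second case;
\item clique merging;
\item writing $\varphi_t-\varphi_\delta=(t-\delta)\,q(x)$ and showing $q>0$ on a half-line containing $\rho_\delta$.
\end{itemize}
That skeleton is sound. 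The gap is in the quantitative comparisons, which is where all the content lies.

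\textbf{Your lower bound for $\rho(G')$ is too weak.} Here $G'=K_\delta\vee(K_{n-(b+1)\delta+bk-1}\cup(b\delta-bk+1)K_1)$. The clique $K_{n-(b+1)\delta+bk-1}$ is joined to $K_\delta$, so $G'$ contains $K_{n-b\delta+bk-1}$ and $\rho(G')>n-b\delta+bk-2$. This is the adjacency analogue of the bound the paper obtains from Lemma 2.6. Your bound $n-(b+1)\delta+bk-2$ is short by $\delta$, and this breaks both cases.
\begin{itemize}
\item \emph{Case $t\ge\delta$.} Take $b=k=1$, $\delta=3$, $n=15$; this is admissible, since $\max\{b\delta^2-bk,(2b+3)\delta-bk+1\}=15$. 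For $t=7$ the competitor is $G_7=K_7\vee 8K_1$, and $\varphi_7(x)-\varphi_3(x)=4(x^2-9x-18)$. This is negative at $x=8$ and positive only for $x>10.68$. The correct bound $x\ge 11$ just suffices: $\rho(G_7)=3+\sqrt{65}\approx 11.06<\rho(G')\approx 11.21$.
\item \emph{Case $t<\delta$.} Here the weak bound is fatal. Take $t=k+1$, which lies in this case whenever the case is nonempty. Your competitor $G_3$ contains a clique of order $n-(b+1)(\delta-k)$, so $\rho(G_3)\ge n-(b+1)\delta+(b+1)k-1$. That already exceeds your lower bound for $\rho(G')$ by $k+1$, so no upper estimate of $\rho(G_3)$ can close the argument.
\end{itemize}

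\textbf{The tools you name for $t<\delta$ point the wrong way.}
\begin{itemize}
\item Feeding the Perron vector of $G'$ into the Rayleigh quotient of $A(G_3)$ bounds $\rho(G_3)$ from below, not above. A Rayleigh argument would need the Perron vector of $G_3$ together with an explicit edge switching from $G_3$ into $G'$.
\item A degree-type upper bound is useless, because the vertices of $K_t$ have degree $n-1>\rho(G')$.
\end{itemize}

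\textbf{What has to be proved is delicate.} Put $N=n-b\delta+bk-1$. The main clique of $G_3$ has order $N_3=n-(b(t-k)+1)(\delta-t+1)$, and $N-N_3=(\delta-t)\bigl(b(t-k-1)+1\bigr)$. This difference equals $1$ when $t=k+1=\delta-1$. So you must show $\rho(G_3)<N-1$, which means $\rho(G_3)$ may exceed its own clique bound $N_3-1$ by less than one.

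A workable route is the paper's Case 3 method. Show that the characteristic polynomial of the equitable quotient matrix of $G_3$ has no root in $[n-b\delta+bk-2,\infty)$ for $k+1\le t\le\delta-1$. This can be done, for instance, through its difference with the cubic of $G'$, which is divisible by $t-\delta$ because $G_3=G'$ at $t=\delta$. This is presumably the step where the hypothesis $n\ge b\delta^2-bk$ is meant to be used.
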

	Secondly, we recall a sufficient condition established by Wang et al. \cite{LL} based on the signless Laplacian spectral radius for a graph with a given minimum degree to be $k$-critical with respect to $[1,b]$-odd factors.
	
	\noindent\begin{theorem}\label{th:1.3.} \cite{LL}
		Let $b$, $k$ and $n$ be positive integers such that $n \equiv k \pmod{2}$ and $b \equiv 1 \pmod{2}$, and $n\geq (2b+4.3)\delta-2bk+1.1$. Let $G$ be a $(k+1)$-connected graph of order $n$ with minimum degree $\delta$. If
		$$q(G)\geq q(K_{\delta}\vee(K_{n-(b+1)\delta+bk-1}\cup(b\delta-bk+1)K_{1})),$$
		then $G$ is $k$-critical with respect to $[1,b]$-odd factors, unless $$G\cong K_{\delta}\vee(K_{n-(b+1)\delta+bk-1}\cup(b\delta-bk+1)K_{1}).$$
	\end{theorem}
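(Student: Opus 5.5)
This is Theorem \ref{th:1.3.}, quoted from \cite{LL}, so I will follow the standard scheme for spectral conditions guaranteeing $k$-criticality with respect to $[1,b]$-odd factors. The first step is the structural characterization. By the criterion of Amahashi (and its $k$-critical form used in \cite{YYSX}), $G$ is $k$-critical with respect to $[1,b]$-odd factors if and only if
\[
o(G-X-S)\le b|S|
\]
for every $X\subseteq V(G)$ with $|X|=k$ and every $S\subseteq V(G)\setminus X$. Suppose, for contradiction, that $G$ is not $k$-critical. Since $n\equiv k \pmod 2$, the graph $G-X$ has even order, so the violation can be upgraded by parity to
\[
o(G-X-S)\ge b|S|+2 .
\]
Set $s=|S|$ and $t=s+k$. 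Because $G$ is $(k+1)$-connected, the case $S=\emptyset$ is excluded: then $G-X$ would be disconnected, contradicting $(k+1)$-connectivity. Hence $s\ge1$. By edge-maximality, since adding edges increases $q$, we may assume
\[
G = K_{t}\vee\bigl(K_{n_1}\cup K_{n_2}\cup\cdots\cup K_{n_{bs+2}}\bigr)
\]
with all $n_i$ odd and $\sum n_i=n-t$.

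The second step is to optimize the component sizes. The standard transfer argument for $q$ shows that $q$ increases when we move vertices from a smaller clique into the largest one. The minimum degree constraint forces each small clique to have order $n_i\ge \delta-t+1$ whenever $t<\delta$. So there are two extremal shapes. If $t\ge\delta$, the candidate is $K_t\vee(K_{n-t-bs-1}\cup(bs+1)K_1)$. If $t<\delta$, the candidate is $K_t\vee(K_{n-t-(bs+1)(\delta-t+1)}\cup(bs+1)K_{\delta-t+1})$.

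The third step is to compare each candidate with $K_{\delta}\vee(K_{n-(b+1)\delta+bk-1}\cup(b\delta-bk+1)K_1)$, which is the case $t=\delta$, i.e.\ $s=\delta-k$. For $t\ge\delta$, I would use the quotient matrix of the equitable partition into three parts. The largest root is then governed by a cubic characteristic polynomial $f_t(x)$. Evaluating $f_t$ at $q$ of the extremal graph shows that the spectral radius is decreasing in $t$ once $n\ge(2b+4.3)\delta-2bk+1.1$.

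For $t<\delta$, no exact spectral radius is needed. I would bound $q$ above using $q\le \max_v (d_v + m_v)$, or a row-sum bound after a diagonal scaling. I would bound $q$ of the extremal graph below by $q\ge 2(n-b\delta+bk-\ldots)$, coming from its large clique $K_{n-b\delta+bk-1}$. Then I would show the former bound is strictly smaller than the latter under the given lower bound on $n$.

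The main obstacle is the range $t<\delta$, especially $t$ small. There the competing graph has many medium cliques, and the crude bounds must still beat a quantity near $2n-2b\delta$. I expect this is exactly where the odd constants $4.3$ and $1.1$ in the hypothesis on $n$ come from. I would first try the bound $q\le 2\Delta$ combined with a finer estimate $q\le \max_{uv\in E}(d_u+d_v)$. If that fails, I would use a quotient-matrix polynomial evaluated at the lower bound $2(n-(b+1)\delta+bk)+\delta$.

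Equality occurs only when $t=\delta$ and all components except one are singletons, which is the stated exceptional graph.
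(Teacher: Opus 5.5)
Note first that the paper gives no proof of this theorem; it is quoted from Wang et al. The natural yardstick is the paper's own proofs of Theorems~1.5 and~1.6. Your reduction is correct and matches theirs:
\begin{itemize}
\item Amahashi's criterion applied to $G-X$;
\item the parity upgrade to $o(G-X-S)\ge b|S|+2$;
\item $S\neq\emptyset$ by $(k+1)$-connectivity;
\item saturation to $K_t\vee(K_{n_1}\cup\cdots\cup K_{n_{bs+2}})$;
\item the clique-merging lemma;
\item $n_i\ge\delta-t+1$ from the minimum degree.
\end{itemize}

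\textbf{The gap is in the case $t<\delta$.} There you want to show that an upper bound for $q(G_3)$, where $G_3=K_t\vee(K_{n-t-(bs+1)(\delta-t+1)}\cup(bs+1)K_{\delta-t+1})$, lies below a clique lower bound for $q(G')$, where $G'$ is the extremal graph. The lower bounds you name are $2(n-b\delta+bk-2)$ and $2(n-(b+1)\delta+bk)+\delta$. Under the hypothesis $n\ge(2b+4.3)\delta-2bk+1.1$ this cannot work, because $q(G_3)$ itself can exceed those bounds.

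Take $b=11$, $k=1$, $\delta=4$, $n=85$; then $n\ge 84.3$ and $n\equiv k \pmod 2$. Take $t=2$.
\begin{itemize}
\item Then $G_3=K_2\vee(K_{47}\cup 12K_3)$. This is a genuine $2$-connected, non-$1$-critical graph with minimum degree $4$.
\item Its equitable quotient gives $x-85=\frac{94}{x-94}+\frac{72}{x-6}$. At $x=100$ the left side is $15$ and the right side is about $16.4$, so $q(G_3)\approx 100.4$.
\item For $G'=K_4\vee(K_{47}\cup 34K_1)$ one gets $x-87=\frac{188}{x-96}+\frac{136}{x-4}$, so $q(G')\approx 106.4$.
\item Both of your lower bounds equal $100$, which is below $q(G_3)$.
\end{itemize}
So no upper bound on $q(G_3)$, however sharp, can be pushed below them. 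Your fallback of evaluating $G_3$'s quotient polynomial at $2(n-(b+1)\delta+bk)+\delta$ fails for the same reason, since that point also equals $100$. The bound $q\le\max_v(d_v+m_v)$ is far worse: a vertex of the $K_2$ in $G_3$ has $d_v+m_v\approx 113.6$.

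The reason is that $n$ may be only of order $b\delta$. Then the $\delta$ vertices of degree $n-1$ raise $q(G')$ well above its clique bound, by more than the gap between the two graphs.

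\textbf{What is needed is a comparison with $q(G')$ itself.} This is what the paper does in Case~3 for $\mu_1$ and $\eta_1$:
\begin{itemize}
\item Form the quotient cubics $\varphi_{G_3}$ and $\varphi_{G'}$.
\item Use $\varphi_{G'}(q(G'))=0$ to write $\varphi_{G_3}(q(G'))=(\delta-t)\,h(q(G'))$, with $h$ quadratic.
\item Prove $h(q(G'))$ has the required sign, using a crude bound only to place $q(G')$ in a region where $h$ is monotone.
\end{itemize}

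Your inequality runs opposite to the paper's: you need $q(G_3)<q(G')$, not $\mu_1(G_3)>\mu_1(G')$. So the single sign condition $\varphi_{G_3}(q(G'))>0$ is not enough. A monic cubic is also positive between its two smaller roots, so you must also show $q(G')$ lies above the second largest root of $\varphi_{G_3}$.

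The same remark applies to your case $t\ge\delta$. There, in addition, the claim that $q$ is ``decreasing in $t$'' is false. For $b=1$, $k=2$, $n=100$:
\begin{itemize}
\item $K_{25}\vee(K_{51}\cup 24K_1)$ has $q\approx 161.9$;
\item the top of the range, the complete split graph $K_{50}\vee 50K_1$, has $q=99+\sqrt{4901}\approx 169.0$.
\end{itemize}
So every $t\in[\delta+1,\frac{n+bk-2}{b+1}]$, including this complete-split end, has to be compared directly with $t=\delta$.
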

	
	Finally, we recall a sufficient condition established by Wang and Zhang \cite{JL} based on the distance spectral radius for a connected graph to be $k$-critical with respect to $[1,b]$-odd factors.
	
	\noindent\begin{theorem}\label{th:1.4.} \cite{JL}
		Let $b$, $k$ and $n$ be three positive integers with $b \equiv 1 \pmod{2}$ and $n \equiv k\pmod{2}$, and let $G$ be a connected graph of order $n\geq\frac{b^2+2bk+5b+2k+4}{b}$. If $G$ satisfies
		$$\mu_{1}(G)\leq\mu_{1}(K_{k+1}\vee(K_{n-b-k-2}\cup(b+1)K_{1})),$$
		then $G$ is $k$-critical with respect to $[1,b]$-odd factors, unless
		$$G\in\left\lbrace K_{k}\vee \left(K_{n-k-1}\cup K_{1}\right),~K_{k+1}\vee \left(K_{n-k-b-2}\cup (b+1)K_{1}\right)\right\rbrace.$$
	\end{theorem}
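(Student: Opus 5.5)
We argue by contradiction, following the standard structural approach of \cite{JL}. Suppose $G$ is connected of order $n\geq\frac{b^2+2bk+5b+2k+4}{b}$, satisfies $\mu_1(G)\leq\mu_1(K_{k+1}\vee(K_{n-b-k-2}\cup(b+1)K_1))$, but is not $k$-critical with respect to $[1,b]$-odd factors. Then there is $X\subseteq V(G)$ with $|X|=k$ such that $G-X$ has no $[1,b]$-odd factor.

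The first tool is the Amahashi-type criterion: a graph $H$ has a $[1,b]$-odd factor if and only if $o(H-S)\leq b|S|$ for every $S\subseteq V(H)$. Applied to $H=G-X$, it gives a set $S$ with $o(G-X-S)>b|S|$. Set $s=|S|$ and $T=X\cup S$, so $|T|=k+s$. Since $n\equiv k\pmod 2$, the graph $G-X$ has even order. A parity argument then shows that $o(G-T)$ and $bs$ have the same parity, and therefore $o(G-T)\geq bs+2$.

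The second step uses the fact that adding edges strictly decreases $\mu_1$ (Perron--Frobenius applied to the distance matrix). This lets us assume $G$ is a spanning subgraph of
\[
G_s:=K_{k+s}\vee\bigl(K_{n_1}\cup K_{n_2}\cup\cdots\cup K_{n_{bs+2}}\bigr),
\]
where all $n_i$ are odd and $\sum n_i=n-k-s$. Then $\mu_1(G)\geq\mu_1(G_s)$, with equality only if $G=G_s$.

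Next, a standard transfer lemma shows that moving vertices into one big odd clique, keeping all other parts equal to $K_1$, decreases $\mu_1$. Hence $\mu_1(G_s)\geq\mu_1(K_{k+s}\vee(K_{n-k-s-bs-1}\cup(bs+1)K_1))$. Denote the right-hand graph by $H_s$.

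There are two cases.
\begin{itemize}
\item If $s=0$, the graph is $K_k\vee(K_{n-k-1}\cup K_1)$. One checks directly that it is not $k$-critical and that its $\mu_1$ is at most the bound, so it is an exceptional graph.
\item If $s\geq1$, we compare $H_s$ with $H_1=K_{k+1}\vee(K_{n-b-k-2}\cup(b+1)K_1)$, which is the second exceptional graph.
\end{itemize}

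The main obstacle is the case $s\geq 2$, where we must show $\mu_1(H_s)>\mu_1(H_1)$. Write the quotient (equitable partition) matrix of $D(H_s)$ with three parts: the join part $K_{k+s}$, the big clique, and the $bs+1$ isolated vertices, whose mutual distance is $2$. Its largest root $\theta(s)$ satisfies a cubic $f_s(x)=0$. We then show $\mu_1(H_s)>\mu_1(H_1)$ by evaluating $f_s$ at $x=\mu_1(H_1)$, or at a convenient lower bound for it. For the lower bound we would use the Rayleigh quotient with the all-ones vector, $\mu_1\geq 2W/n$, which is about $n-1+$ (a term from the distance-$2$ pairs). We need $f_s(\mu_1(H_1))<0$ for $2\leq s\leq \frac{n-k-1}{b+1}$.

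The plan is to compute $f_s(x)-f_1(x)$ explicitly and show it is negative on the relevant range. This reduces to a polynomial inequality in $s$ that is convex, so it suffices to check the two endpoints $s=2$ and $s=\lfloor (n-k-1)/(b+1)\rfloor$. The hypothesis $n\geq\frac{b^2+2bk+5b+2k+4}{b}$ is exactly what makes these endpoint checks go through.

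For $s=1$, equality $\mu_1(G)=\mu_1(H_1)$ forces $G=H_1$. Combined with the $s=0$ case, this gives the stated exceptions.
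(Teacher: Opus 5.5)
\textbf{The gap is in your case $s=0$, and it cannot be closed.} This is the case where $G-X$ itself already has at least two odd components. There you only know that $G$ is a spanning subgraph of $K_{k}\vee(K_{n_1}\cup K_{n_2})$. Hence $\mu_{1}(G)\geq\mu_{1}(H_0)$, where $H_0=K_{k}\vee(K_{n-k-1}\cup K_{1})$. But, as you note yourself, $\mu_{1}(H_0)$ lies strictly below the threshold $\mu_{1}(H_1)$. So this lower bound is compatible with the hypothesis $\mu_{1}(G)\leq\mu_{1}(H_1)$, and it yields neither a contradiction nor $G\cong H_0$. Your equality argument for $s=1$ works only because there the extremal graph sits exactly at the threshold. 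The sentence ``if $s=0$, the graph is $K_{k}\vee(K_{n-k-1}\cup K_{1})$'' silently replaces $G$ by the extremal graph of its case.

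The window $(\mu_{1}(H_0),\mu_{1}(H_1)]$ really does contain non-$k$-critical graphs outside the exceptional set, so the statement as quoted is false. Take $n=15$ and $k=b=1$; then $n\geq 14$ and $n\equiv k\pmod 2$. Let $G'$ be $H_0=K_{1}\vee(K_{13}\cup K_{1})$ minus one edge $pq$ of the $K_{13}$. Then $G'$ is connected, and deleting the dominating vertex $w$ isolates the vertex $u$, so $G'$ is not $1$-critical. Also $G'\notin\{H_0,H_1\}$. Using the equitable partition $\{w\},\{u\},\{p,q\},R$ with $|R|=11$, $\mu_{1}(G')$ is the Perron root of
\[
\left(\begin{matrix} 0&1&2&11\\ 1&0&4&22\\ 1&2&2&11\\ 1&2&2&10 \end{matrix}\right).
\]
This root is about $16.35$; the positive vector $(0.9,1.66,1.06,1)$ already shows it is below $16.5$. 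By contrast, $\mu_{1}(H_1)=\mu_{1}(K_{2}\vee(K_{11}\cup 2K_{1}))$ is the largest root of $x^{3}-13x^{2}-82x-24$, which exceeds $17$. In general, deleting an edge of the big clique of $H_0$ gives $\mu_{1}=n+2+o(1)$, whereas $\mu_{1}(H_1)=n+3b+2+o(1)$. For $b\geq 3$, even $K_{k}\vee(K_{n-k-3}\cup K_{3})$ falls in the window.

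\textbf{What extra hypothesis is needed.} The natural one is the $(k+1)$-connectivity that this paper imposes in Theorems 1.5 and 1.6. The paper does not prove Theorem 1.4 itself; it only cites it. Under $(k+1)$-connectivity your case $s=0$ (the paper's $|S|=k$) is excluded immediately, since $o(G-S)\geq 2$ would contradict it. Then $H_0$ drops out of the exceptional set. Your remaining steps are then the standard route:
\begin{itemize}
\item $s=1$ is handled by equality;
\item $s\geq 2$ is handled by showing the difference of the quotient cubics is negative at $\mu_{1}(H_1)$, as the paper does with $g_{D(G_2)}-g_{D(G')}$.
\end{itemize}
You would still need to actually verify the claimed convexity in $s$. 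You should also use the correct range $s\leq\frac{n-k-2}{b+1}$, which keeps the big clique nonempty.
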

	
	Motivated by \cite{JL, LL, YYSX}, we study the problem of the existence of $k$-critical graphs with respect to $[1,b]$-odd factors, and we present a different sufficient condition on the distance spectral radius to ensure that a graph $G$ with minimum degree is $k$-critical with respect to $[1,b]$-odd factors. We state our main result as follows:

	\noindent\begin{theorem}\label{th:1.5.}
		Let $b$, $k$ and $n$ be positive integers with $n \equiv k \pmod{2}$ and $b \equiv 1 \pmod{2},$ and let $G$ be a $(k+1)$-connected graph of order $n\geq \max \{(2b^2+3b+11)\delta-(2b^2+\frac{5}{2}b+\frac{3}{2})k+\frac{3}{2}b+2+\frac{3}{2b},~\frac{2}{3}b^2\delta^3+\frac{4}{3}b^2k\delta^2\}$ with minimum degree $\delta$. If
		$$\mu_{1}(G)\leq\mu_{1}(K_{\delta}\vee(K_{n-(b+1)\delta+bk-1}\cup(b\delta-bk+1)K_{1})),$$
		then $G$ is $k$-critical with respect to $[1,b]$-odd factors, unless $$G\cong K_{\delta}\vee(K_{n-(b+1)\delta+bk-1}\cup(b\delta-bk+1)K_{1}).$$
	\end{theorem}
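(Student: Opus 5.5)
We argue by contradiction. Suppose $G$ satisfies the hypotheses but is not $k$-critical with respect to $[1,b]$-odd factors. Then there is $X\subseteq V(G)$ with $|X|=k$ such that $G-X$ has no $[1,b]$-odd factor. We invoke the Amahashi-type criterion for $[1,b]$-odd factors: a graph $H$ of even order has a $[1,b]$-odd factor if and only if $o(H-S)\le b|S|$ for every $S\subseteq V(H)$. Since $n\equiv k\pmod 2$, the graph $G-X$ has even order. Hence there is $S\subseteq V(G)\setminus X$ with $o(G-X-S)>b|S|$. Parity then upgrades this to $o(G-X-S)\ge b|S|+2$.

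Put $T=X\cup S$, $s=|S|$ and $t=|T|=s+k$. Because $G$ is $(k+1)$-connected, we get $s\ge1$. Then $G-T$ has at least $q:=b(t-k)+2$ odd components. The key parameter is $t$, which we split into two cases, $t\ge\delta$ and $t\le\delta-1$.

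\textbf{Step 1 (adding edges).} Adding edges strictly decreases $\mu_1$. So $G$ is a spanning subgraph of $G_1=K_t\vee(K_{n_1}\cup K_{n_2}\cup\cdots\cup K_{n_q})$ with odd $n_i$ and $\sum n_i=n-t$, and $\mu_1(G)\ge\mu_1(G_1)$. The standard moving-vertices lemma for distance spectral radius (Perron vector comparison) then shows that, among such graphs, $\mu_1$ is minimized when all but one $n_i$ equal $1$. This gives $\mu_1(G)\ge \mu_1(K_t\vee(K_{n-t-q+1}\cup(q-1)K_1))$.

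\textbf{Step 2 (case $t\ge\delta$).} Here we compare $K_t\vee(K_{n-t-q+1}\cup(q-1)K_1)$ with $t=s+k$ against the extremal graph, which is the case $t=\delta$. We show that this quantity is minimized at $t=\delta$, with equality only for the extremal graph. For the comparison we use the quotient matrix of the equitable 3-part partition and the Rayleigh quotient $\mu_1\ge x^TDx/x^Tx$, taking $x$ to be the Perron vector of the competitor. Concretely, let $G^\ast$ be the graph for $t\ge\delta+1$, and compute $\mu_1(G^\ast)-\mu_1(\text{extremal})$ via $y^T(D(G^\ast)-D(\text{extremal}))x$ with explicit entries of the two Perron vectors. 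We expect the required sign to follow from $n\ge(2b^2+3b+11)\delta-\cdots$. The competitor has roughly $b(t-\delta)$ more distance-2 pairs among the isolated vertices, and these dominate the decrease from enlarging the clique. Alternatively, we may compare characteristic polynomials of the $3\times3$ quotient matrices and show that the difference is positive at the extremal root.

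\textbf{Step 3 (case $t\le\delta-1$).} Now the degree constraint matters. A singleton odd component has degree at most $t<\delta$, so every small component must have at least $\delta-t+1$ vertices. After adding edges, $G$ is a subgraph of $K_t\vee(K_{n-t-(q-1)(\delta-t+1)}\cup(q-1)K_{\delta-t+1})$. Its distance spectral radius is large, because many vertices lie at distance $2$ from about $q(\delta-t+1)$ others. We lower bound it by the Rayleigh quotient with the all-ones vector, $\mu_1\ge 2W/n$, where $W$ is the Wiener index. We then compare with an upper bound for the extremal graph obtained from its quotient matrix, roughly $n-1+O(b^2\delta^2/n)$. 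The condition $n\ge\frac23b^2\delta^3+\frac43b^2k\delta^2$ is exactly what is needed for the all-ones estimate to beat this, since the gap is about $\frac{2}{n}(\#\text{distance-2 pairs})$.

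\textbf{Main obstacle.} The delicate part is Step 2, in particular comparing spectral radii of two different join graphs whose Perron vectors are only implicitly known. I would try the quotient-matrix polynomial comparison first, evaluating $\varphi_{G^\ast}$ at $\mu_1$ of the extremal graph. I would also bound the Perron entry ratios crudely using $n$ large.
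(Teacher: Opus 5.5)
The genuine gap is Step~3. It fails in two ways.

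First, the upper bound $\mu_1(G')\le n-1+O(b^2\delta^2/n)$ for $G'=K_\delta\vee(K_{n-(b+1)\delta+bk-1}\cup(b\delta-bk+1)K_1)$ is false. $G'$ itself has $m:=b(\delta-k)+1$ vertices at distance $2$ from almost every other vertex. Hence already $\mu_1(G')\ge 2W(G')/n=n-1+2m-O(b^2\delta^2/n)$. The quotient matrix in fact gives $\mu_1(G')\approx n-1+3m$, because the Perron vector is about twice as large on those $m$ vertices.

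Second, even with the correct value, the all-ones Rayleigh quotient is too weak for $G_3=K_t\vee(K_{n-t-M}\cup(b(t-k)+1)K_{\delta-t+1})$, where $M=(b(t-k)+1)(\delta-t+1)$. It yields only about $n-1+2M$. But the true gap $\mu_1(G_3)-\mu_1(G')$ is only about $3(M-m)=3(\delta-t)\bigl(b(t-k-1)+1\bigr)$. At $t=k+1$ one gets $2M-3m=(2-b)(\delta-k)-3<0$ for every odd $b\ge 3$, and also for $b=1$ when $\delta=k+2$.

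A concrete admissible instance shows this. Take $b=k=1$, $\delta=3$, $t=2$, $n=101$; here $n\ge 47$ is required, so the hypotheses hold. Then $2W(G_3)/n=10868/101\approx 107.60$. But $\mu_1(G')\approx 107.98$, the largest root of $x^3-100x^2-862x+94$. So your estimate cannot separate the two graphs, even though in fact $\mu_1(G_3)\approx 110.4$. Your heuristic that the gap is about $\frac{2}{n}(\#\text{distance-}2\text{ pairs})$ misses that $G'$ has about $mn$ such pairs itself.

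The paper handles this case with the same device you propose as a fallback in Step~2. It writes $g_{D(G_3)}(x)-g_{D(G')}(x)=(t-\delta)h(x)$ for the characteristic polynomials of the two $3\times 3$ equitable quotient matrices. It shows $h$ is increasing on $[n-b\delta+bk-2,\infty)$, using $\mu_1(G')\ge n-b\delta+bk-2$ by interlacing with the clique $K_{n-b\delta+bk-1}$. It then shows $h(n-b\delta+bk-2)>0$, which is exactly where $n\ge \frac23 b^2\delta^3+\frac43 b^2k\delta^2$ enters. This gives $g_{D(G_3)}(\mu_1(G'))<0$, hence $\mu_1(G_3)>\mu_1(G')$.

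If you prefer to stay with Rayleigh quotients, you need two things. One is a test vector weighted about $2$ on the small cliques, which recovers roughly $n-1+3M$ up to an $O(M^2/n)$ error. The other is a genuine upper bound such as $\mu_1(G')\le n-1+3m$, for example Collatz--Wielandt with weights $(1,1,2)$ on the quotient matrix. You would then still have to check those error terms against the stated bound on $n$.

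Your reduction (Amahashi on $G-X$, filling in cliques, the moving-vertices lemma) matches the paper's use of Lemmas 2.1 and 2.8. Your Step~2 fallback is the paper's Case~1. Note that the sign needed there is $\varphi_{G_2}(\mu_1(G'))<0$ for the monic cubic, not positive. All of the substance lies in estimates you have not yet carried out.
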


	Let $b$, $k$ and $n$ be three positive integers with $n \equiv k \pmod{2}$ and $b \equiv 1\pmod{2}$, and let $G^{\prime}:= K_{\delta}\vee(K_{n-(b+1)\delta+bk-1}\cup(b\delta-bk+1)K_{1})$. If $\delta=k+2$, then $G^{\prime}= K_{k+2}\vee(K_{n-2b-k-3}\cup(2b+1)K_{1})$. It is easy to observe that $K_{k+2}\vee(K_{n-2b-k-3}\cup(2b+1)K_{1})$ is a proper subgraph of the extremal graph $K_{k+1}\vee(K_{n-b-k-2}\cup(b+1)K_{1})$ as discussed in Theorem 1.4.
	Let $G^{\ast}$ be the graph obtained from $K_{k+2}\vee(K_{n-2b-k-3}\cup(2b+1)K_{1})$ by adding one independent edge between two vertices in $(2b+1)K_{1}.$ See (Fig.2), $G^{\ast}$ is also a proper subgraph of $K_{k+1}\vee(K_{n-b-k-2}\cup(b+1)K_{1})$. We further obtain that  $\mu_{1}(K_{k+1}\vee(K_{n-b-k-2}\cup(b+1)K_{1}))<\mu_{1}(G^{\ast})<\mu_{1}(K_{k+2}\vee(K_{n-2b-k-3}\cup(2b+1)K_{1})).$ By Theorem 1.1, we know $G^{\ast}$ is beyond the conditions of Theorem 1.4, yet it is still $k$-critical with respect to $[1,b]$-odd factors. This is also the reason why we proposed Theorem 1.5.
	
	\begin{figure}[htbp]
		\centering
         \includegraphics[scale=0.9]{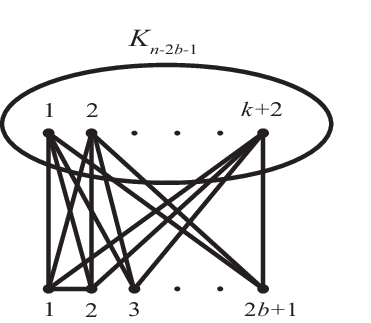}
		\caption{$G^{\ast}$.}
	\end{figure}
	
	Our second main result gives a sufficient condition to ensure that a graph $G$ with minimum degree is $k$-critical with respect to $[1,b]$-odd factors based on the distance signless Laplacian spectral radius of $G$.
	
	\noindent\begin{theorem}\label{th:1.6.}
		Let $b$, $k$ and $n$ be positive integers with $n \equiv k \pmod{2}$, $b \equiv 1 \pmod{2}$ and $b\geq k$, and let $G$ be a $(k+1)$-connected graph of order $n \geq \max \{(2b^2+4b)\delta^2+2\delta +2b^2k^2,~\frac{6}{5}b^2\delta^3+\frac{8}{5}b^2k\delta^2\}$ with minimum degree $\delta$. If
		$$\eta_{1}(G)\leq\eta_{1}(K_{\delta}\vee(K_{n-(b+1)\delta+bk-1}\cup(b\delta-bk+1)K_{1})),$$
		then $G$ is $k$-critical with respect to $[1,b]$-odd factors, unless $$G\cong K_{\delta}\vee(K_{n-(b+1)\delta+bk-1}\cup(b\delta-bk+1)K_{1}).$$
	\end{theorem}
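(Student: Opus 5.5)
We argue by contradiction. Suppose $G$ is a $(k+1)$-connected graph of order $n$ with minimum degree $\delta$ satisfying the distance signless Laplacian condition, but $G$ is not $k$-critical with respect to $[1,b]$-odd factors. We will use the standard criterion, the Amahashi/Cui--Kano type characterization also used in \cite{YYSX, LL}: a graph $H$ has a $[1,b]$-odd factor if and only if $o(H-S)\le b|S|$ for every $S\subseteq V(H)$. Since $G$ is not $k$-critical, there is $X\subseteq V(G)$ with $|X|=k$ and some $S\subseteq V(G)-X$ with $o(G-X-S)\ge b|S|+1$. Put $s=|S|$ and $T=X\cup S$, so $|T|=s+k$. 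Parity ($n\equiv k$) gives $o(G-T)\ge bs+2$ when $s\ge1$. The case $s=0$ is handled separately: $(k+1)$-connectivity forces $G-X$ to be connected, which gives a contradiction with parity. Adding edges strictly decreases $\eta_1$, so we may assume $G$ is edge-maximal. Then $G$ is a spanning subgraph of
\[
G_{s}:=K_{s+k}\vee\bigl(K_{n_1}\cup K_{n_2}\cup\cdots\cup K_{n_{bs+2}}\bigr),
\]
with odd $n_i$ and $\sum n_i=n-s-k$, and $\eta_1(G)\ge\eta_1(G_s)$.

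Next we reduce to two extremal configurations. First we show that $\eta_1$ is minimized over the parts when all but one $n_i$ equal $1$. The tool is the standard transfer argument: compare Perron vector entries and show that moving vertices into the largest clique decreases $\eta_1$ via a Rayleigh-quotient difference $\mathbf{x}^T(Q_D(G_s)-Q_D(G_s'))\mathbf{x}>0$. This yields $\eta_1(G_s)\ge\eta_1(K_{s+k}\vee(K_{n-(b+1)s-k-1}\cup(bs+1)K_1))$. The minimum degree now enters. If $s+k\ge\delta$, we need to compare this graph with the target $G^\prime=K_\delta\vee(K_{n-(b+1)\delta+bk-1}\cup(b\delta-bk+1)K_1)$; note that $G^\prime$ is exactly $s+k=\delta$. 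If $s+k<\delta$, a singleton odd component would have degree $s+k<\delta$, which is impossible. So the odd components of size $<\delta-s-k+1$ cannot be singletons, and we instead minimize with those components forced to have at least $\delta-s-k+1$ vertices. This gives the competitor $K_{s+k}\vee(K_{n-s-k-(bs+1)(\delta-s-k+1)}\cup(bs+1)K_{\delta-s-k+1})$.

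The main obstacle is comparing $\eta_1$ of these competitors with $\eta_1(G^\prime)$ over the whole range of $s$. For $s+k\ge\delta$ we plan to show that $\eta_1$ increases in $s$ by a direct comparison of quotient matrices. The equitable partition into $K_{s+k}$, the big clique, and the singletons gives a $3\times3$ quotient matrix whose largest root is $\eta_1$. We then compare the characteristic polynomials $f_s(x)$ and $f_{\delta-k}(x)$ at $x=\eta_1(G^\prime)$, showing the difference has a fixed sign using $n\ge(2b^2+4b)\delta^2+2\delta+2b^2k^2$. Alternatively, when $s$ is large, we use a crude lower bound such as $\eta_1\ge 2W(G)/n$ (Wiener index) and count the extra distance-$2$ pairs among the $bs+1$ singletons; this is roughly $b^2s^2$, which dominates once $n$ is large.

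For $s+k\le\delta-1$ we will use the Rayleigh quotient with the all-ones vector, $\eta_1\ge \frac{4W}{n}$, together with an upper estimate of $\eta_1(G^\prime)$ of about $2n-2+O(b^2\delta^2/n)$. The competitor has roughly $(bs+1)^2(\delta-s-k+1)^2/2$ nonadjacent pairs at distance $2$. Comparing this with $G^\prime$'s $\binom{b\delta-bk+1}{2}+(b\delta-bk+1)(n-(b+1)\delta+bk-1)$ missing pairs requires careful bookkeeping, and this is where the second bound $n\ge\frac65b^2\delta^3+\frac85b^2k\delta^2$ should be used. Finally, equality throughout forces $s=\delta-k$ and $G\cong G^\prime$. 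That $G^\prime$ itself is not $k$-critical follows from taking $S$ of size $\delta-k$ inside $K_\delta$, which leaves $b(\delta-k)+2$ odd components.
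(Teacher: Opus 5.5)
\textbf{The gap: your argument for $s+k\le\delta-1$.} Your setup matches the paper: Amahashi's criterion with $T=X\cup S$, the parity step, the reduction to singleton parts, and the quotient-polynomial comparison at $x=\eta_1(G^{\prime})$ for $s+k\ge\delta$. The plan for the range $s+k\le\delta-1$ does not work. It rests on the estimate $\eta_1(G^{\prime})\approx 2n-2+O(b^2\delta^2/n)$, which is false.

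Already Lemma 2.7 gives $\eta_1(G^{\prime})\ge 4W(G^{\prime})/n>2n+4b\delta-4bk+1$, and much more is true. The quotient matrix $R$ of $Q_D(G^{\prime})$ is similar to the symmetric matrix with entries $\sqrt{R_{ij}R_{ji}}$. Write $m=b\delta-bk+1$ for the number of singletons. Interlacing with the principal $2\times 2$ submatrix on the big clique and the singletons gives
\[
\eta_1(G^{\prime})\ \ge\ 2n-2+(b-1)\delta-bk+2\sqrt{m\,\bigl(n-(b+1)\delta+bk-1\bigr)} .
\]
So $\eta_1(G^{\prime})-2n$ grows like $\sqrt{n}$. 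The Perron vector is far from constant: its entries on the singletons are roughly $\sqrt{n/m}$ times those on the big clique.

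On the other side, take your competitor $G_3=K_{s+k}\vee(K_{N_3}\cup(bs+1)K_p)$ with $p=\delta-s-k+1$. Its all-ones Rayleigh quotient is $4W(G_3)/n\le 2n-2+4(bs+1)p\le 2n+4b\delta^2$, a bound independent of $n$. Note also that the dominant distance-$2$ pairs are the $(bs+1)pN_3$ pairs between the small cliques and the big clique, not only the pairs among the small cliques. Hence $4W(G_3)/n<\eta_1(G^{\prime})$ for all large $n$, and no amount of bookkeeping yields the inequality you need. The same objection undercuts your alternative Wiener-index argument for large $s+k$, which would also need an upper bound on $\eta_1(G^{\prime})$ of the wrong size. (For $Q_D$ the bound is $4W/n$, not $2W/n$.)

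\textbf{What the paper does instead.} In this range the paper repeats the exact computation of Case 1. It writes $g_{Q_D(G_3)}(\eta_1(G^{\prime}))-g_{Q_D(G^{\prime})}(\eta_1(G^{\prime}))=(s-\delta)\,z(\eta_1(G^{\prime}))$ with $z$ quadratic. It uses $4W(G^{\prime})/n$ only as a \emph{lower} bound, to place $\eta_1(G^{\prime})$ in $[2n+4b\delta-4bk+1,\infty)$. It then shows $z$ is increasing and positive on that interval, which is where $n\ge\frac{6}{5}b^2\delta^3+\frac{8}{5}b^2k\delta^2$ is used. Your Case 1 comparison must locate $\eta_1(G^{\prime})$ in the same way, not near $2n-2$.

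\textbf{A smaller gap in the reduction to $G_3$.} For $Q_D$ this reduction is not a free transfer argument; the paper invokes Lemma 2.10, whose hypotheses must be checked. The largest part must have at least $5(\delta-s-k+1)$ vertices, which the paper derives from the lower bound on $n$. The number of parts $bs+2$ must be at least $s+k+1$. That second condition is exactly where the hypothesis $b\ge k$ enters, and your proposal never uses it.
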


	Kano and Matsuda \cite{CLX} established the following connectivity property for graphs that are $k$-critical with respect to $(1, f)$-odd factors.
	\noindent\begin{theorem}\label{th:1.7.} \cite{CLX}
		Let $k$ be a positive integer, and let $G$ be a graph of order $n\geq k+2$ such that $n \equiv k \pmod{2}$. If $G$ is $k$-critical with respect to $(1,f)$-odd factors, then $G$ is $k$-connected.
	\end{theorem}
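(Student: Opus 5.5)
We argue by contradiction, using only the parity obstruction for odd factors. First, a graph $H$ with a $(1,f)$-odd factor $F$ has no component of odd order. Indeed, if $C$ is a component of $H$, then $F[V(C)]$ is a union of components of $F$, so $\sum_{v\in V(C)} d_F(v)=2e(F[V(C)])$ is even. Every term $d_F(v)$ is odd, so $|V(C)|$ must be even. Hence it suffices to find a set $X$ with $|X|=k$ such that $G-X$ has an odd component.

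Suppose $G$ is $k$-critical with respect to $(1,f)$-odd factors but not $k$-connected. Since $n\geq k+2>k$, the definition gives a set $S$ with $|S|\leq k-1$ such that $G-S$ is disconnected. Let $C_1,C_2$ be two distinct components of $G-S$, and put $t=k-|S|\geq 1$. The graph $G-S$ has $n-|S|=(t-1)+(n-k+1)$ vertices, and $n-k+1\geq 3$. We therefore choose a set $T$ of $t-1$ vertices of $G-S$ so that some vertex $u_1\in V(C_1)$ and some $u_2\in V(C_2)$ survive. This is possible because at least $3\geq 2$ vertices remain: fix $u_1,u_2$ first and take $T$ among the other $n-|S|-2\geq t-1$ vertices. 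Set $S'=S\cup T$, so $|S'|=k-1$. Then $G-S'$ is still disconnected, since any $u_1$--$u_2$ path in $G-S'$ would also lie in $G-S$. Moreover, $G-S'$ has $n-k+1$ vertices, which is odd because $n\equiv k\pmod 2$.

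Since the total order of $G-S'$ is odd, some component $D$ of $G-S'$ has odd order. As $G-S'$ has at least two components, pick a component $E\neq D$ and a vertex $w\in V(E)$, and let $X=S'\cup\{w\}$, so that $|X|=k$. Deleting $w$ does not affect $D$, so $D$ is still a component of $G-X$ and still has odd order. By the parity observation, $G-X$ has no $(1,f)$-odd factor, contradicting $k$-criticality. Hence $G$ is $k$-connected.

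The only delicate point is the choice of $T$. We must use up exactly $k-|S|$ deletions while preserving disconnectedness, and the hypotheses $n\geq k+2$ and $n\equiv k\pmod 2$ are exactly what make this counting work and force the odd component.
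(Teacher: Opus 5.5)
Your proof is correct and complete. The handshake argument shows that a graph with a $(1,f)$-odd factor has no component of odd order. The padding step correctly enlarges a separator of size at most $k-1$ to one of size exactly $k-1$ while keeping $u_1$ and $u_2$ in different components. The odd order $n-k+1$ of $G-S'$ then forces an odd component $D$. Deleting one vertex from a different component leaves $D$ intact, so you get a $k$-set $X$ for which $G-X$ has no $(1,f)$-odd factor.

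The paper gives no proof of this statement; it quotes it from Kano and Matsuda, so there is no argument in the paper to compare yours with. The one tool the paper provides that bears on it is Lemma 2.1. For $|S|=k$ the only $k$-subset of $S$ is $S$ itself, so the condition there reads $o(G-S)\le \sum_{v\in S}f(v)-\sum_{v\in S}f(v)=0$, which is exactly your parity observation. Your handshake argument proves this trivial necessity direction directly, so your proof does not depend on the characterization at all.

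A minor remark on your closing comment: the counting in the padding step needs only $n-k\ge 1$, not $n\ge k+2$; the latter is just part of the definition of $k$-criticality. Also, the hypothesis $n\equiv k \pmod 2$ is actually forced: if $n-k$ were odd, every $G-X$ with $|X|=k$ would have an odd component, and $G$ could not be $k$-critical.
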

	
	In the proof of Theorems 1.5 and 1.6, we assume that $G$ is $(k+1)$-connected, which leaves the question open of whether these results still hold for graphs with exactly connectivity  $k$.
	
	The rest of this paper is organized as follows: In Section 2, some lemmas used in this paper are presented. In Section 3, we present the proof of Theorem 1.5. In Section 4, we give the proof of Theorem 1.6.
	
	\section{Preliminaries }
	\hspace{1.3em}
	In this section, we first provide a necessary and sufficient condition for a graph to be $k$-critical with respect to $(1,f)$-odd factors. Cui and Kano \cite{B} confirmed the following result for the case $k=0$, while Kano and Matsuda \cite{CLX} later extended this result to positive integers $k$.
	\noindent\begin{lemma}\label{le:2.1.} \cite{B, CLX}
		Let $k$ be a nonnegative integer, and let $G$ be a graph of order $n\geq k+2$. Then $G$ is $k$-critical with respect to $(1,f)$-odd factors if and only if $$o(G-S) \leq \sum\limits_{v\in S}f(v)- \max\left\lbrace \sum\limits_{v\in X}f(v): X\subseteq S, |X|=k\right\rbrace$$
		for every subset $S\subseteq V(G)$ with $|S|\geq k$, where $o(G-S)$ denotes the number of odd components in $G-S$.
	\end{lemma}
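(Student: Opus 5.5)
The plan is to reduce Lemma \ref{le:2.1.} to its case $k=0$, which is the theorem of Cui and Kano \cite{B}: a graph $H$ has a $(1,f)$-odd factor if and only if $o(H-T)\leq \sum_{v\in T}f(v)$ for every $T\subseteq V(H)$. I take this as the base result. The general statement should then follow by applying it to every graph $G-X$ with $|X|=k$ and regrouping the resulting family of inequalities. For brevity write $f(Y)=\sum_{v\in Y}f(v)$.

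\emph{Unfolding the definition.} By definition, $G$ is $k$-critical with respect to $(1,f)$-odd factors if and only if $G-X$ has a $(1,f)$-odd factor for every $X\subseteq V(G)$ with $|X|=k$. The hypothesis $n\geq k+2$ guarantees that $G-X$ is a nonempty graph, so the Cui--Kano criterion applies to it with $f$ restricted to $V(G)-X$. Hence $G$ is $k$-critical exactly when
$$o\big((G-X)-T\big)\leq f(T)\quad\text{for all } X\subseteq V(G) \text{ with } |X|=k \text{ and all } T\subseteq V(G)-X.$$

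\emph{Reindexing.} Put $S=X\cup T$. As $(X,T)$ ranges over the pairs above, $(S,X)$ ranges over exactly the pairs with $|S|\geq k$ and $X\subseteq S$, $|X|=k$. Moreover $(G-X)-T=G-S$ and $f(T)=f(S)-f(X)$. So the condition becomes
$$o(G-S)\leq f(S)-f(X)\quad\text{for every } S \text{ with } |S|\geq k \text{ and every } X\subseteq S,\ |X|=k.$$
For fixed $S$, this family of inequalities holds if and only if it holds for the $X$ maximizing $f(X)$. That is precisely the inequality in the lemma, which proves both directions at once.

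\emph{Checks.} I would verify two degenerate points. First, the case $S=X$ (that is, $T=\emptyset$) gives $o(G-X)\leq 0$; this encodes the necessary parity condition that every component of $G-X$ has even order. Second, the base theorem needs no hypothesis beyond $H$ being a finite graph and $f$ taking odd positive values. The only substantive ingredient, and the main obstacle if one wanted a self-contained proof, is the Cui--Kano theorem itself. Its proof goes through Lovász's parity $(g,f)$-factor theorem or Tutte's $f$-factor theorem with a gadget construction. I would simply invoke it rather than reprove it.
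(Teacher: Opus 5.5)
Your reduction is correct. Setting $S=X\cup T$ gives a bijection $(X,T)\leftrightarrow(S,X)$, and under it the Cui--Kano criterion, applied to every $G-X$ with $|X|=k$, becomes exactly the stated inequality once you take the $X$ of maximum $f(X)$. The paper gives no proof of its own; it cites Cui--Kano for $k=0$ and Kano--Matsuda for general $k$. Your argument is the short derivation of the general case from the $k=0$ case, so, as you note, the only substantive input is the Cui--Kano theorem.
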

	Next, we explain the concepts of \emph{quotient matrix} and \emph{equitable partition}. Let $M$ be a real $n\times n$ matrix  and let $X=\{1,2,\ldots,n\}$. Given a partition  $\Pi=\{X_{1}, X_{2}, \ldots, X_{m}\}$ with $X=X_{1}\cup X_{2}\cup\cdots \cup X_{m}$, the matrix $M$ can be written in the following block form
	\begin{align*}
		M=\left(
		\begin{array}{ccccccccc}
			M_{11} & M_{12} & \cdots & M_{1m} \\
			M_{21} & M_{22} & \cdots & M_{2m} \\
			\vdots & \vdots & \ddots &\vdots \\
			M_{m1} & M_{m2} & \cdots & M_{mm} \\
		\end{array}
		\right).
	\end{align*}
	The quotient matrix $R(M)$ of the matrix $M$ (with respect to the given partition) is the $m\times m$ matrix whose entries are the average row sums of the blocks $M_{i,j}$ of $M$. The above partition is called equitable if each block $M_{i,j}$ of $M$ has constant row sum.
	
	\noindent\begin{lemma}\label{le:2.2.} \cite{MH}
		Let $M$ be a real matrix of order $n$ with an equitable partition $\Pi$, and let $R(M)$ be the corresponding equitable quotient matrix. Then the eigenvalues of $R(M)$ are also eigenvalues of $M$. Furthermore, if $M$ is nonnegative, then $\rho(R(M))=\rho(M)$, where $\rho(R(M))$ and $\rho(M)$ denote the largest eigenvalues of the matrices $R(M)$ and $M$.
	\end{lemma}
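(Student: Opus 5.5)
The plan is to encode the partition by its characteristic matrix and read both claims off a single intertwining identity. Let $\Pi=\{X_1,\ldots,X_m\}$ and let $P$ be the $n\times m$ matrix whose $j$-th column is the indicator vector of $X_j$. Each row of $P$ contains exactly one entry $1$. The columns of $P$ have disjoint nonempty supports, so $P$ has full column rank $m$.

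The partition is equitable, so for every $i\in X_s$ the sum $\sum_{l\in X_j}M_{il}$ equals the constant row sum of the block $M_{sj}$, which is $R(M)_{sj}$. In matrix form this says
\begin{align*}
MP=P\,R(M).
\end{align*}
Now let $R(M)x=\lambda x$ with $x\neq 0$. Then $M(Px)=PR(M)x=\lambda Px$. Since $P$ has full column rank, $Px\neq 0$, so $\lambda$ is an eigenvalue of $M$, with eigenvector $Px$ obtained by lifting $x$ blockwise. This proves the first assertion.

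For the second assertion, assume $M\ge 0$; then $R(M)\ge 0$ as well. For nonnegative matrices the Perron--Frobenius theorem says the spectral radius is itself an eigenvalue, with a nonnegative eigenvector, and it is the largest real eigenvalue. This is how I read ``largest eigenvalue'' in the statement, and it makes both $\rho(M)$ and $\rho(R(M))$ real eigenvalues. I will prove the two inequalities separately.

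\emph{Step 1: $\rho(R(M))\le\rho(M)$.} The number $\rho(R(M))$ is an eigenvalue of $R(M)$, hence of $M$ by the first part. Every eigenvalue of $M$ has modulus at most $\rho(M)$, so $\rho(R(M))\le\rho(M)$.

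\emph{Step 2: $\rho(M)\le\rho(R(M))$.} Here I use a left Perron vector of $M$: a nonzero $y\ge 0$ with $y^{T}M=\rho(M)y^{T}$, which exists by Perron--Frobenius applied to $M^{T}$. Multiplying the identity $MP=PR(M)$ on the left by $y^{T}$ gives
\begin{align*}
(y^{T}P)\,R(M)=y^{T}MP=\rho(M)\,(y^{T}P).
\end{align*}
The $j$-th entry of $y^{T}P$ is $\sum_{i\in X_j}y_i$, the sum of the entries of $y$ over $X_j$. These entries are nonnegative and not all zero, because $y\ge 0$, $y\ne 0$ and the blocks cover $\{1,\ldots,n\}$. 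Hence $y^{T}P$ is a nonzero left eigenvector of $R(M)$ for the eigenvalue $\rho(M)$, so $\rho(M)\le\rho(R(M))$.

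Combining Steps 1 and 2 gives $\rho(R(M))=\rho(M)$. I expect Step 2 to be the only delicate point. The lifting argument of the first part only transfers eigenvalues from $R(M)$ to $M$, so the reverse inequality needs something extra. Projecting a nonnegative left eigenvector of $M$ down to $R(M)$ supplies it, and nonnegativity is exactly what rules out $y^{T}P=0$.
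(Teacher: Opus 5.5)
Your proof is correct and complete. The identity $MP=PR(M)$ gives the first assertion by lifting eigenvectors. Projecting a nonnegative left Perron vector of $M$ through $P$ gives $\rho(M)\le\rho(R(M))$, and you correctly use nonnegativity to guarantee $y^{T}P\neq 0$.

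The paper gives no proof of this lemma; it is quoted from \cite{MH}, so there is no in-paper argument to compare against. The argument usually seen for the second part lifts a Perron vector $x$ of $R(M)$ to $Px$. It then uses the fact that if a nonnegative matrix has a positive eigenvector, the corresponding eigenvalue is its spectral radius. That route needs $x>0$, i.e.\ irreducibility. The reducible case is then handled by perturbing $M$ to $M+\varepsilon J$, which keeps the partition equitable, and passing to the limit by continuity. Your projection of a left Perron vector avoids both the irreducibility assumption and the limiting step. It uses only the weak Perron--Frobenius theorem applied to $M^{T}$.

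An equally short alternative, also free of irreducibility, runs as follows. From $M^{k}P=PR(M)^{k}$ and nonnegativity one gets $\|M^{k}\|_{\infty}=\|R(M)^{k}\|_{\infty}$ for all $k$, and Gelfand's formula then gives equality of spectral radii. For the paper's applications, namely distance and distance signless Laplacian matrices of connected graphs, which are irreducible, any of these routes suffices.
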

	
	\noindent\begin{lemma}\label{le:2.3.} \cite{LL}
		Let $M$ be a nonnegative irreducible matrix of order $n$ with an equitable partition $\Pi$, and let $X$ be the Perron vector of $M$. Then the entries of $X$ are constant on each cell of the partition $\Pi$.
	\end{lemma}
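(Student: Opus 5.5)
The plan is to lift a Perron eigenvector of the quotient matrix back to $M$, and then use the uniqueness of the Perron vector of an irreducible matrix. Write $\Pi=\{X_1,\dots,X_m\}$ with every cell nonempty. Let $P$ be the $n\times m$ characteristic matrix of $\Pi$, so $P_{ij}=1$ if $i\in X_j$ and $0$ otherwise. The first step is to restate equitability in matrix form. Each block $M_{ij}$ has constant row sum, equal to the $(i,j)$ entry of $R(M)$, so $MP=PR(M)$: the $(u,j)$ entry of $MP$ is the row sum of block $M_{ij}$ in the row of $u\in X_i$, and this is the $(u,j)$ entry of $PR(M)$.

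Next, $R(M)$ is nonnegative, since its entries are row sums of nonnegative blocks. By the Perron--Frobenius theorem for general nonnegative matrices, $R(M)$ therefore has a nonzero eigenvector $y\geq 0$ belonging to $\rho(R(M))$. By Lemma~\ref{le:2.2.}, $\rho(R(M))=\rho(M)$. Set $z=Py$. Then
\[
Mz=MPy=PR(M)y=\rho(M)\,Py=\rho(M)\,z .
\]
Moreover $z\geq 0$, and $z\neq 0$ because $P$ has full column rank (all cells are nonempty). By construction, $z$ is constant on each cell $X_j$, where it takes the value $y_j$.

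Finally, since $M$ is nonnegative and irreducible, the Perron--Frobenius theorem says that the eigenspace of $\rho(M)$ is one-dimensional and is spanned by the positive Perron vector $X$. Hence $z=cX$ for some scalar $c$, and $c>0$ because $z\geq 0$ is nonzero. So $X=z/c$ is constant on every cell of $\Pi$, which proves the lemma.

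The only delicate point is that $R(M)$ need not be irreducible, so one cannot directly take a positive eigenvector of it. This is why the argument uses only the weak Perron--Frobenius statement for $R(M)$ (a nonnegative eigenvector for $\rho(R(M))$). It relies on Lemma~\ref{le:2.2.} to match the eigenvalue with $\rho(M)$, and on irreducibility of $M$ alone to force uniqueness. Alternatively, one could avoid Lemma~\ref{le:2.2.}: a nonnegative nonzero eigenvector of an irreducible nonnegative matrix must correspond to $\rho(M)$, so any nonnegative eigenvector $y$ of $R(M)$ lifts to a multiple of $X$.
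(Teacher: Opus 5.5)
Your proof is correct. The identity $MP=PR(M)$ is exactly equitability written in matrix form. The lifted vector $Py$ is a nonzero nonnegative eigenvector of $M$ for $\rho(M)$. Simplicity of the Perron root of an irreducible nonnegative matrix then forces $X$ to be a positive multiple of $Py$.

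The paper gives no proof of this lemma; it is quoted from Wang et al. So there is no competing route to compare against, and your lifting argument is the standard one. The variant you mention at the end is the more self-contained of your two versions. It bypasses Lemma~\ref{le:2.2.} by using only that a nonzero nonnegative eigenvector of an irreducible nonnegative matrix must belong to $\rho(M)$.

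One correction to your closing remark: under the hypotheses of the lemma, $R(M)$ is in fact irreducible. Suppose a nonempty proper set $I$ of cell indices had $R(M)_{ij}=0$ for all $i\in I$ and $j\notin I$. The $(i,j)$ entry of $R(M)$ is the common row sum of the nonnegative block $M_{ij}$, so every such block would be zero. Then the union of the cells indexed by $I$ would witness the reducibility of $M$, a contradiction. Hence you could have taken a positive Perron vector of $R(M)$ directly. This does not affect your argument, which never uses irreducibility of $R(M)$; the ``delicate point'' you flag simply is not present.
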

	
	We present the following two fundamental results about the distance spectral radius and the distance signless Laplacian spectral radius of a graph.
	\noindent\begin{lemma}\label{le:2.4.} \cite{CLW}
		Let $G$ be a connected graph with two nonadjacent vertices $u, v\in V(G)$. Then $\mu_{1}(G+uv) < \mu_{1}(G)$.
	\end{lemma}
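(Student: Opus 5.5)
We need to prove Lemma 2.4: adding an edge to a connected graph strictly decreases the distance spectral radius.

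The plan is to compare the distance matrices entrywise and then use Perron--Frobenius. Set $G'=G+uv$. Adding an edge cannot create a longer shortest path, so $d_{G'}(x,y)\le d_G(x,y)$ for every pair $x,y$. Since $u,v$ are nonadjacent in $G$, we have $d_G(u,v)\ge 2$, while $d_{G'}(u,v)=1$. Hence
\[
D(G)-D(G')
\]
is a nonnegative symmetric matrix. It is nonzero, because its $(u,v)$ and $(v,u)$ entries are at least $1$.

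Next I record the properties needed for Perron--Frobenius. Both graphs are connected, so every off-diagonal entry of $D(G)$ and $D(G')$ is positive. Thus both matrices are nonnegative and irreducible; in fact they are positive off the diagonal. Let $x$ be the Perron vector of $D(G')$. It is a unit vector with all entries strictly positive.

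By the Rayleigh quotient,
\[
\mu_1(G)\ \ge\ x^{T}D(G)x \;=\; x^{T}D(G')x + x^{T}\bigl(D(G)-D(G')\bigr)x .
\]
The first term on the right equals $\mu_1(G')$. The second term is at least
\[
2\bigl(d_G(u,v)-1\bigr)x_u x_v \;>\; 0,
\]
since every entry of $D(G)-D(G')$ is nonnegative and $x$ is positive. This gives $\mu_1(G)>\mu_1(G')$, which is the claim.

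The only point needing care is strictness, and it comes from the positivity of the Perron vector of $D(G')$. That positivity holds because $G'$ is connected, so $D(G')$ is irreducible. No step is a real obstacle.
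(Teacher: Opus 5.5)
Your argument is correct: the entrywise domination $D(G)\ge D(G+uv)$, strict in the $(u,v)$ entry, combined with the Rayleigh quotient at the positive Perron vector $x$ of $D(G+uv)$, gives $\mu_1(G)\ge \mu_1(G+uv)+2\bigl(d_G(u,v)-1\bigr)x_ux_v>\mu_1(G+uv)$. Taking the Perron vector of $D(G+uv)$ rather than of $D(G)$ is what makes the inequality run the right way. The paper does not prove this lemma; it quotes it from the literature, and your argument is the standard Perron--Frobenius comparison proof of it.
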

	\noindent\begin{lemma}\label{le:2.5.} \cite{CLW}
		Let $e$ be an edge of a graph $G$ such that $G-e$ is still connected. Then $\eta_{1}(G-e) > \eta_{1}(G)$.
	\end{lemma}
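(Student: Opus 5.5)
The plan is to compare the two distance signless Laplacian matrices entrywise and then apply the Rayleigh quotient to the Perron vector of $Q_D(G)$.

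Let $e=uv$ and $G'=G-e$. By hypothesis $G'$ is connected, so $G$ is connected too and both $Q_D(G)$ and $Q_D(G')$ are well defined.

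First I will show the entrywise domination $Q_D(G')\geq Q_D(G)$ with at least one strict entry. Deleting an edge can only destroy paths, never create them, so $d_{G'}(x,y)\geq d_G(x,y)$ for all $x,y\in V(G)$. Moreover $d_G(u,v)=1$ while $d_{G'}(u,v)\geq 2$, since $u$ and $v$ are no longer adjacent. Summing rows gives $Tr_{G'}(w)\geq Tr_G(w)$ for every vertex $w$, with strict inequality at $w=u$ and $w=v$. Hence $Q_D(G')-Q_D(G)$ is a nonnegative matrix whose $(u,v)$ and $(v,u)$ entries are at least $1$.

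Next, $Q_D(G)$ is a symmetric nonnegative matrix. All its off-diagonal entries are distances between distinct vertices of a connected graph, so they are positive, and the matrix is irreducible. By the Perron--Frobenius theorem, $\eta_1(G)$ has a unit eigenvector $\mathbf{x}$ with all entries strictly positive.

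Finally, by the Rayleigh principle for symmetric matrices,
\[
\eta_1(G')\geq \mathbf{x}^{T}Q_D(G')\mathbf{x}
=\mathbf{x}^{T}Q_D(G)\mathbf{x}+\mathbf{x}^{T}\bigl(Q_D(G')-Q_D(G)\bigr)\mathbf{x}
\geq \eta_1(G)+2\bigl(d_{G'}(u,v)-1\bigr)x_ux_v .
\]
The last term is at least $2x_ux_v>0$, because $\mathbf{x}$ is positive. Therefore $\eta_1(G-e)>\eta_1(G)$.

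The only point needing care is strictness. It rests on the Perron vector being strictly positive, which is where irreducibility, and hence connectivity of $G$, is used. The connectivity of $G-e$ is needed only so that $Q_D(G-e)$ has finite entries.
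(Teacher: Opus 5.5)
Your proof is correct and complete: the entrywise domination $Q_D(G-e)\geq Q_D(G)$, with the $(u,v)$-entry increased by $d_{G-e}(u,v)-1\geq 1$, combined with the strictly positive Perron vector of the irreducible matrix $Q_D(G)$ in the Rayleigh quotient, gives the strict inequality. The paper gives no proof of this lemma and only quotes it from [CLW]; your argument is the standard Perron--Frobenius monotonicity argument for results of this kind.
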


	\noindent\begin{lemma}\label{le:2.6.} \cite{CLW}
		Let $A$ be a real symmetric $n\times n$ matrix and let $B$ be an $m\times m$ principal submatrix of $A$ with $m<n$. Let $\lambda_{1}(A) \geq \lambda_{2}(A) \geq \cdots \geq \lambda_{n}(A)$ be the eigenvalues of $A$, and $\lambda_{1}(B) \geq \lambda_{2}(B) \geq \cdots \geq \lambda_{m}(B)$ be the eigenvalues of $B$. Then for $i=1, 2, \ldots, m$,  $$\lambda_{i}(A) \geq \lambda_{i}(B) \geq \lambda_{n-m+i}(A).$$
	\end{lemma}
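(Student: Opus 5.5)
The plan is to prove Lemma \ref{le:2.6.} from the Courant--Fischer min--max characterization of eigenvalues of real symmetric matrices. After a simultaneous permutation of rows and columns, which changes neither spectrum, we may assume that $B$ is the leading $m\times m$ principal submatrix of $A$, i.e. $B=P^{T}AP$ with $P=\binom{I_m}{0}$ the $n\times m$ matrix whose columns are the first $m$ standard basis vectors. The key identity is then $y^{T}By=(Py)^{T}A(Py)$ for all $y\in\mathbb{R}^m$, together with $\|Py\|=\|y\|$. This says that the Rayleigh quotient of $B$ is exactly the Rayleigh quotient of $A$ restricted to the $m$-dimensional coordinate subspace $W=P\mathbb{R}^m$.

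For the left inequality $\lambda_i(A)\ge\lambda_i(B)$, use the max--min form
$$\lambda_i(A)=\max_{\dim U=i}\ \min_{0\neq x\in U}\frac{x^{T}Ax}{x^{T}x}.$$
Choose an $i$-dimensional subspace $U'\subseteq\mathbb{R}^m$ attaining $\lambda_i(B)$ in the analogous formula for $B$. Its image $PU'$ is an $i$-dimensional subspace of $\mathbb{R}^n$, since $P$ is injective, and on $PU'$ the Rayleigh quotient of $A$ equals that of $B$ on $U'$. Hence $\lambda_i(A)\ge\lambda_i(B)$.

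For the right inequality $\lambda_i(B)\ge\lambda_{n-m+i}(A)$, the cleanest route is to apply the inequality just proved to $-A$ and its principal submatrix $-B$. The eigenvalues of $-A$ in nonincreasing order are $-\lambda_{n}(A)\ge\cdots\ge-\lambda_1(A)$, so $\lambda_j(-A)=-\lambda_{n+1-j}(A)$, and likewise $\lambda_j(-B)=-\lambda_{m+1-j}(B)$. Taking $j=m+1-i$ gives $-\lambda_{n-m+i}(A)\ge-\lambda_i(B)$, which is the claim. Alternatively, one can argue directly with the min--max form $\lambda_{n-m+i}(A)=\min_{\dim U=m-i+1}\max_{x\in U}R_A(x)$: take $PU''$, where $U''$ attains $\lambda_i(B)$ in the min--max form for $B$, and note that $\dim PU''=m-i+1$.

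There is no serious obstacle here. The only points needing care are the index bookkeeping in the reflection $A\mapsto -A$ and checking that $P$ preserves both the dimension of subspaces and the Rayleigh quotients. The Courant--Fischer theorem itself is taken as standard.
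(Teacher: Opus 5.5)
Your proof is correct. The reduction to $B=P^{T}AP$ works, and the max--min argument on the $i$-dimensional subspace $PU'$ gives the left inequality. The reflection $A\mapsto -A$ with $j=m+1-i$ gives the right inequality, and so does your alternative min--max route, since $\dim PU''=m-i+1=n-(n-m+i)+1$.

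The paper does not prove this lemma at all. It quotes the Cauchy interlacing theorem without proof, and your Courant--Fischer argument is the standard proof of that theorem.

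Note that the paper only ever uses the case $i=1$ of the left inequality. It uses it to get $\mu_{1}(G^{\prime})\geq n-b\delta+bk-2$ from the principal submatrix of $D(G^{\prime})$ indexed by the clique $V(K_{\delta})\cup V(K_{n-(b+1)\delta+bk-1})$. That submatrix is the all-ones matrix minus the identity, of order $n-b\delta+bk-1$. For this instance you do not need min--max at all: pad a top eigenvector of $B$ with zeros and apply the Rayleigh bound $\lambda_{1}(A)\geq x^{T}Ax/x^{T}x$.
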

	The \emph{Wiener index} $W(G)$ of a connected graph $G$ of order $n$ is defined by the sum of all distances in $G$, that is, $W(G)=\sum\limits_{i<j}d_{ij}(G)$.
	Xing et al. \cite{LLS} presented a lower bound on the distance signless Laplacian spectral radius of a graph.
	
	\noindent\begin{lemma}\label{le:2.7.} \cite{LLS}
		Let $G$ be a connected graph of order $n$.
		Then $$\eta_{1}(G) \geq \frac{4W(G)}{n}$$
		with equality if and only if $G$ is transmission-regular.
	\end{lemma}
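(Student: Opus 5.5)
The plan is to prove Lemma \ref{le:2.7.} by the Rayleigh quotient principle for the real symmetric matrix $Q_{D}(G)=D(G)+Tr(G)$, using the normalized all-ones vector as a test vector, and then to settle the equality case with the Perron--Frobenius theorem.

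\emph{The inequality.} Let $\mathbf{1}$ be the all-ones vector of length $n$ and set $x=\frac{1}{\sqrt{n}}\mathbf{1}$. Since $Q_{D}(G)$ is real symmetric,
$$\eta_{1}(G)=\max_{\|y\|=1}y^{T}Q_{D}(G)y\geq x^{T}Q_{D}(G)x=\frac{1}{n}\,\mathbf{1}^{T}Q_{D}(G)\mathbf{1}.$$
The right-hand side splits into two sums. The sum of all entries of $D(G)$ is $\sum_{i,j}d_{ij}=2W(G)$, because each unordered pair is counted twice. The trace part is $\sum_{v}Tr(v)$, and this is again $\sum_{i,j}d_{ij}=2W(G)$. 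Hence $x^{T}Q_{D}(G)x=\frac{4W(G)}{n}$, which gives the bound.

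\emph{Equality implies transmission-regular.} Suppose $\eta_{1}(G)=\frac{4W(G)}{n}$. Then the Rayleigh quotient attains its maximum at $x$. For a symmetric matrix, a unit vector attaining the maximum of $y^{T}Q_{D}(G)y$ must be an eigenvector for $\eta_{1}(G)$; this follows by expanding $x$ in an orthonormal eigenbasis. The $v$-th entry of $Q_{D}(G)\mathbf{1}$ is $Tr(v)+\sum_{u}d(u,v)=2Tr(v)$. So $Q_{D}(G)\mathbf{1}=\eta_{1}(G)\mathbf{1}$ forces $Tr(v)$ to be constant, i.e., $G$ is transmission-regular.

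\emph{Transmission-regular implies equality.} If $Tr(v)=t$ for all $v$, then $Q_{D}(G)\mathbf{1}=2t\,\mathbf{1}$, so $\mathbf{1}$ is a positive eigenvector with eigenvalue $2t$. Because $G$ is connected, all off-diagonal distances are positive, so $Q_{D}(G)$ is nonnegative and irreducible. By Perron--Frobenius, the only eigenvalue with a positive eigenvector is the spectral radius, so $\eta_{1}(G)=2t$. Finally, $nt=\sum_{v}Tr(v)=2W(G)$ gives $2t=\frac{4W(G)}{n}$.

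The only step needing care is the converse direction: one must justify that the positive eigenvector $\mathbf{1}$ belongs to the largest eigenvalue, and this is exactly where irreducibility, and hence connectivity of $G$, is used. All other steps are routine.
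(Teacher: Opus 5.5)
Your proof is correct and complete. The paper gives no proof of its own here; it cites Xing et al., and the standard argument behind that result is the same one you give: take the Rayleigh quotient of $Q_{D}(G)$ at $\frac{1}{\sqrt{n}}\mathbf{1}$, and note that equality forces $\mathbf{1}$ to be an eigenvector, i.e.\ $2Tr(v)$ is constant.

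One minor point concerns your closing remark. Irreducibility is not actually needed for the converse. For the nonnegative matrix $Q_{D}(G)$, the row-sum bound gives $\eta_{1}(G)\le\max_{v}2Tr(v)=2t=\frac{4W(G)}{n}$, which matches your lower bound. Connectivity is used mainly to make $D(G)$ well defined.
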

	
	Finally, we present some relationships between the magnitudes of the distance spectral radius and the distance signless Laplacian spectral radius among several graphs.
	
	\noindent\begin{lemma}\label{le:2.8.} \cite{ZLLW}
		Let $n=s+\sum\limits_{i=1}^{t}n_{i}$. If $n_{1}\geq n_{2}\geq\cdots\geq n_{t}\geq p\geq1$ and $n_{1}<n-s-p(t-1)$, then
		$\mu_{1}(K_{s}\vee (K_{n_{1}}\cup K_{n_{2}}\cup\cdots\cup K_{n_{t}}))>\mu_{1}(K_{s}\vee (K_{n-s-p(t-1)}\cup (t-1)K_{p}))$.
	\end{lemma}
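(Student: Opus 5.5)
Write $G=K_{s}\vee (K_{n_{1}}\cup\cdots\cup K_{n_{t}})$. We never use edges inside $K_s$ or between $K_s$ and the cliques beyond the join structure, so the only distances in $G$ are $1$ (inside a clique, or involving $K_{s}$) and $2$ (between two different cliques $K_{n_i}$, $K_{n_j}$). The plan is a vertex-transfer argument. Since $n_{1}<n-s-p(t-1)$, some $n_{j}$ with $j\geq 2$ satisfies $n_{j}>p$. Let $H$ be obtained from $G$ by moving one vertex $v$ of $K_{n_{j}}$ into $K_{n_{1}}$, so that the cliques have sizes $n_{1}+1$ and $n_{j}-1\geq p$. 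We show $\mu_{1}(G)>\mu_{1}(H)$. Repeating the transfer, always into the largest clique and always from a clique still larger than $p$, terminates at $K_{s}\vee (K_{n-s-p(t-1)}\cup (t-1)K_{p})$ after at least one step. This gives the strict inequality.

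\emph{Perron vector of $H$.} Let $y$ be the Perron vector of $D(H)$ and $\mu=\mu_{1}(H)$. The partition of $V(H)$ into $K_{s}$ and the $t$ cliques is equitable for $D(H)$, so by Lemma \ref{le:2.3.} $y$ is constant on each clique; write $y_{i}$ for its value on a clique of size $m_{i}$. Let $S$ be the sum of $y$ over $K_{s}$ and $W$ the sum of $y$ over all cliques. For a vertex in the clique of size $m_i$, the eigen-equation reads
$$\mu y_{i}=(m_{i}-1)y_{i}+S+2(W-m_{i}y_{i}),$$
hence $(\mu+1+m_{i})y_{i}=S+2W=:C$, a constant independent of $i$. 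Thus $y_{i}=C/(\mu+1+m_{i})$.

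\emph{Rayleigh quotient comparison.} By Rayleigh's principle, $\mu_{1}(G)\geq y^{T}D(G)y$. The matrix $D(G)-D(H)$ is nonzero only in the row and column of $v$. Relative to $H$, its entry is $+1$ between $v$ and the $n_{1}$ original vertices of the enlarged clique, and $-1$ between $v$ and the $n_{j}-1$ remaining vertices of the shrunken clique. Writing $a=n_{1}+1$ and $b=n_{j}-1$ for the new clique sizes,
$$y^{T}D(G)y-\mu=2y_{v}\bigl(n_{1}y_{a}-(n_{j}-1)y_{b}\bigr),$$
with $y_{v}=y_{a}=C/(\mu+2+n_{1})$ and $y_{b}=C/(\mu+n_{j})$. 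So it suffices to show
$$n_{1}(\mu+n_{j})-(n_{j}-1)(\mu+2+n_{1})=\mu(n_{1}-n_{j}+1)+n_{1}-2n_{j}+2>0.$$

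\emph{The expected obstacle: the sign of this quantity.} I would argue as follows.
\begin{itemize}
\item Because $n_{1}\geq n_{j}$, we have $n_{1}-n_{j}+1\geq1$.
\item $\mu\geq$ the average transmission of $H$, which is at least $n-1$.
\item We have $n\geq n_{1}+n_{j}\geq 2n_{j}$.
\end{itemize}
Combining these, $\mu(n_{1}-n_{j}+1)+n_{1}-2n_{j}+2\geq (n-1)+n_{1}-2n_{j}+2>0$. Hence $y^{T}D(G)y>\mu_{1}(H)$, so $\mu_{1}(G)>\mu_{1}(H)$, and iterating the transfer completes the proof.
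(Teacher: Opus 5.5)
Your proof is correct. The paper gives no argument of its own for this lemma: it imports it from Zheng et al.\ and uses it as a black box, so there is no in-text proof to compare against. Your one-vertex transfer is a complete, self-contained replacement, and every step checks out:
\begin{itemize}
\item The partition into $K_s$ and the cliques is equitable for $D(H)$.
\item The eigen-equation gives $y_i=C/(\mu+1+m_i)$.
\item $D(G)-D(H)$ is supported on the row and column of $v$, with the signs you state.
\item The decisive quantity $\mu(n_1-n_j+1)+n_1-2n_j+2$ is positive, because $\mu\ge n-1\ge 2n_j-1$.
\end{itemize}
The key choice is taking the Perron vector of $H$, the graph you want to bound from above, so that $\mu_1(G)\ge y^{T}D(G)y>y^{T}D(H)y=\mu_1(H)$, and you make it correctly. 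With the Perron vector of $G$ you would only get $x^{T}D(H)x<\mu_1(G)$, which says nothing about $\mu_1(H)$.

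Compared with citing the lemma, your route gives slightly more. It shows a strict decrease at every single transfer, so every intermediate configuration also lies strictly above the extremal graph. It also makes visible why the Perron entries being \emph{smaller} on larger cliques does not spoil the comparison.

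The only tacit assumption is $s\ge 1$, which is needed for $G$ to be connected and for distances between distinct cliques to equal $2$. Your garbled opening sentence is presumably meant to say this. The assumption holds in every application in the paper, where $s\ge k+1$. The parity of the clique sizes, which matters elsewhere in the paper, plays no role here, so intermediate even sizes during your transfers are harmless.
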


	\noindent\begin{lemma}\label{le:2.9.} \cite{LLC}
		Let $n$, $t$, $s$ and $n_{i} (i=1,2,\ldots,t)$ be positive integers with $n_{1}\geq n_{2}\geq\cdots\geq n_{t}\geq1$ and $n=s+\sum\limits_{i=1}^{t}n_{i}$.
		Then
		$$\eta_{1}(K_{s}\vee (K_{n_{1}}\cup K_{n_{2}}\cup\cdots\cup K_{n_{t}}))\geq\eta_{1}(K_{s}\vee (K_{n-s-t+1}\cup(t-1)K_{1}))$$
		with equality if and only if $K_{s} \vee (K_{n_{1}} \cup K_{n_{2}} \cup \cdots \cup K_{n_{t}}) \cong K_{s} \vee (K_{n-s-t+1} \cup (t-1)K_{1})$.
	\end{lemma}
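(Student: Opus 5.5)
Since $s\ge 1$, every graph $G=K_{s}\vee (K_{n_{1}}\cup\cdots\cup K_{n_{t}})$ has diameter at most $2$. Two vertices are at distance $1$ if they lie in the same clique, or if one of them lies in $K_s$; otherwise they are at distance $2$. Consequently $D(G)=2(J-I)-A(G)$ and $Tr(G)=2(n-1)I-\hat D$, so
$$Q_D(G)=2J+(2n-4)I-Q(G),$$
and for any vector $y$,
$$y^{T}Q_D(G)y=\sum_{\{u,v\}}d_{uv}(y_u+y_v)^2 .$$
My plan is a transfer argument. Take two clique parts $C_i,C_j$ with $n_i\ge n_j\ge 2$, and let $G'$ be obtained by moving one vertex $w$ from $C_j$ into $C_i$. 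I aim to show $\eta_1(G)>\eta_1(G')$. Repeating this move, always from a smaller nontrivial clique into the largest clique, reaches $K_{s}\vee (K_{n-s-t+1}\cup(t-1)K_{1})$ in finitely many steps. This gives the inequality, and it gives strictness whenever the starting graph is not already the extremal one, which is the equality case.

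For a single move, let $y$ be the Perron vector of $Q_D(G')$. The partition of $V(G')$ into $K_s$, the cells $C_i\cup\{w\}$ and $C_j\setminus\{w\}$, and the remaining cliques is equitable. By Lemma~\ref{le:2.3.}, $y$ is constant on each cell; write $y=a$ on $C_i\cup\{w\}$ and $y=b$ on $C_j\setminus\{w\}$. Only pairs containing $w$ change distance. Using the displayed formula and Rayleigh's principle,
$$\eta_1(G)\ \ge\ y^{T}Q_D(G)y=\eta_1(G')+4n_i a^2-(n_j-1)(a+b)^2 .$$
So it suffices to prove $4n_ia^2>(n_j-1)(a+b)^2$. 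To relate $a$ and $b$, I subtract the eigen-equations at a vertex of each of the two cells. Their transmissions are $2n-s-n_i-2$ and $2n-s-n_j$. This gives
$$(\eta_1-2n+s+2n_i+4)\,a=(\eta_1-2n+s+2n_j)\,b,\qquad \eta_1=\eta_1(G').$$

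The main obstacle is estimating the ratio $r=a/b$. The identity above shows $a<b$, so the crude bound $\eta_1\ge 2n-2$ (entrywise domination by $Q_D(K_n)$) is not enough when $n_i\gg n_j$. I would first try to get a sharper lower bound on $\eta_1(G')$, for example $\eta_1\ge 2n-2+c$ with $c$ of order $n-s-n_i$, from Lemma~\ref{le:2.7.} or from a $2\times2$ principal quotient. That would force $r$ close enough to $1$ that $4n_ir^2>(n_j-1)(1+r)^2$, using $n_i\ge n_j$.

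If the single-vertex Rayleigh estimate still fails in some range, I would change the move: transfer all of $C_j$ except one vertex into $C_i$ at once. For that move I would compare the two graphs through their equitable quotient matrices, by Lemma~\ref{le:2.2.}. Subtracting the characteristic polynomials of the two quotients and evaluating at the larger root should give a sign that is checkable by direct computation.
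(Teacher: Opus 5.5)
Since the paper only quotes this lemma from the literature, your argument has to stand on its own. The set-up is correct: $Q_D=2J+(2n-4)I-Q$ for these diameter-$2$ graphs, the Rayleigh comparison $\eta_1(G)\ge \eta_1(G')+4n_ia^2-(n_j-1)(a+b)^2$ holds, and so does the relation $(X+2n_i+4)a=(X+2n_j)b$ with $X:=\eta_1(G')-2n+s$. The gap is that the decisive inequality $4n_ia^2>(n_j-1)(a+b)^2$ is never proved, and the tools you propose for it are too weak.

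When $n_i\gg n_j$, $r=a/b$ is not close to $1$. Take $s=1$ and cliques of sizes $m$ and $2$. Then $G'=K_1\vee(K_{m+1}\cup K_1)$, $\eta_1(G')=2m+2\sqrt m+O(1)$ and $a/b=\Theta(1/\sqrt m)$. The inequality survives only because $n_j-1=1$. Since $b/a=(X+2n_i+4)/(X+2n_j)$ decreases in $X$, you need a lower bound on $X$ of order $\sqrt{n_in_j}$. A bound $\eta_1(G')\ge 2n-2+c$ with $c=O(n-s-n_i)$ cannot give this, because here $n-s-n_i=2$. Concretely, Lemma~\ref{le:2.7.} yields only $X\ge 3-\frac{8}{m+3}$, while the example needs $X>(\sqrt m-2)^2/(\sqrt m-1)$. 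So that bound cannot close the step for large $m$.

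Your fallback (moving $n_j-1$ vertices at once and comparing quotient polynomials) is not an argument either. With arbitrarily many remaining cliques of arbitrary sizes, there is no fixed-size quotient to compute with.

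The missing estimate does come from the $2\times 2$ idea you mention, applied to the principal submatrix of $Q_D(G')$ on $C_i'\cup C_j'$, where $C_i'=C_i\cup\{w\}$ and $C_j'=C_j\setminus\{w\}$. This block has equitable quotient
\[
\begin{pmatrix} 2n-2-s & 2(n_j-1)\\ 2(n_i+1) & 2n-2-s\end{pmatrix}.
\]
Lemmas~\ref{le:2.2.} and~\ref{le:2.6.} then give $X\ge 2\sqrt{(n_i+1)(n_j-1)}-2>2\alpha\beta-2$, with $\alpha=\sqrt{n_i}$ and $\beta=\sqrt{n_j-1}$.

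The target $4n_ia^2>(n_j-1)(a+b)^2$ is equivalent to $b/a<\kappa:=2\alpha/\beta-1$, that is, to $f(X):=\kappa(X+2n_j)-(X+2n_i+4)>0$. Because $n_i\ge n_j>n_j-1$, we have $\kappa>1$, so $f$ is strictly increasing. At the threshold,
\[
f(2\alpha\beta-2)=2(2\alpha-\beta)(\alpha+\beta)-(2\alpha^2+2\alpha\beta+2)=2(n_i-n_j)\ge 0.
\]
Hence $f(X)>0$ and every move strictly lowers $\eta_1$. With this step inserted, your transfer argument proves the lemma, including the equality case.
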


	\noindent\begin{lemma}\label{le:2.10.} \cite{LLJ}
		Let $n$, $p$, $s$, $t$ and $n_{i}(i=1,2,\ldots,t)$ be positive integers with $n_{1}\geq5p$, $t\geq s+1$, $n_{1}\geq n_{2}\geq\cdots\geq n_{t}\geq p\geq1$ and $n=s+\sum\limits_{i=1}^{t}n_{i}$.
		Then
		$$\eta_{1}(K_{s}\vee (K_{n_{1}}\cup K_{n_{2}}\cup\cdots\cup K_{n_{t}}))\geq\eta_{1}(K_{s}\vee (K_{n-s-p(t-1)}\cup(t-1)K_{p}))$$
		with equality if and only if $K_{s}\vee (K_{n_{1}}\cup K_{n_{2}}\cup\cdots\cup K_{n_{t}})\cong K_{s}\vee (K_{n-s-p(t-1)}\cup(t-1)K_{p})$.
	\end{lemma}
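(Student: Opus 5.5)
The plan is to pass from $G=K_{s}\vee (K_{n_{1}}\cup\cdots\cup K_{n_{t}})$ to $G_{0}=K_{s}\vee (K_{n-s-p(t-1)}\cup(t-1)K_{p})$ by a finite sequence of single-vertex moves. Each move takes one vertex $v$ from some clique $K_{n_{j}}$ with $j\geq 2$ and $n_{j}>p$, and puts it into the largest clique. Every intermediate graph still satisfies the hypotheses ($n_{1}\geq 5p$, $t\geq s+1$, all parts $\geq p$), since the largest part only grows and the others stay $\geq p$. So it suffices to prove one step: if $G'$ is obtained from $G$ by such a move, then $\eta_{1}(G)>\eta_{1}(G')$. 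Chaining the steps gives the inequality, and strictness gives the equality case, because $G\ncong G_{0}$ forces at least one move.

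For one step I would use the Rayleigh quotient with the Perron vector $x$ of $Q_{D}(G')$, and use the identity
$$x^{T}Q_{D}(H)x=\sum_{\{u,w\}}d_{H}(u,w)(x_{u}+x_{w})^{2}.$$
Both graphs have diameter $2$, and the move changes only distances at $v$. In $G$, $v$ is at distance $2$ from the $n_{1}$ vertices of the big clique and at distance $1$ from the other $n_{j}-1$ vertices of its own clique; in $G'$ it is the reverse. Let $x_{1}$ be the Perron entry on the big clique of $G'$ (this clique contains $v$), $x_{j}$ the entry on the reduced clique of size $n_{j}-1$, and $y$ the entry on $K_{s}$. These values are constant on cells by Lemma \ref{le:2.3.} applied to the equitable partition of $G'$. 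Then
$$\eta_{1}(G)-\eta_{1}(G')\geq x^{T}Q_{D}(G)x-x^{T}Q_{D}(G')x=n_{1}(2x_{1})^{2}\cdot\tfrac14\cdot 4-\ldots,$$
that is, more precisely,
$$\Delta=n_{1}\,(2x_{1})^{2}-(n_{j}-1)(x_{1}+x_{j})^{2}$$
up to the exact bookkeeping of which entries appear. So the goal is $\Delta>0$.

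To control $\Delta$, I would write the eigen-equations of the equitable quotient matrix of $Q_{D}(G')$ (Lemma \ref{le:2.2.}) for a vertex in the big clique and a vertex in a small clique, and subtract them. This gives $(\eta_{1}(G')-c_{1})x_{1}$ and $(\eta_{1}(G')-c_{j})x_{j}$ related by explicit transmission constants $c_{1},c_{j}$, and hence $x_{j}/x_{1}$ as a rational function of $\eta_{1}(G')$. Next I would bound $\eta_{1}(G')$ from below, either by Lemma \ref{le:2.7.} via $4W(G')/n$ or by interlacing with a principal submatrix (Lemma \ref{le:2.6.}). This pins $x_{j}/x_{1}$ close to $1$, so that $n_{1}\cdot 4x_{1}^{2}$ dominates $(n_{j}-1)(x_{1}+x_{j})^{2}\approx 4(n_{j}-1)x_{1}^{2}$, using $n_{1}\geq n_{j}$.

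I expect the main obstacle to be this ratio estimate. Vertices in the small clique have larger transmission, so $x_{j}>x_{1}$, and one must show the excess is small enough not to overturn the count advantage $n_{1}>n_{j}-1$. This is where the hypotheses $n_{1}\geq 5p$ and $t\geq s+1$ should enter. The first ensures the big clique is heavy enough; the second ensures that $\eta_{1}(G')$ is large relative to the difference of transmissions $c_{j}-c_{1}$, which is about $n_{1}-n_{j}$. If the direct Rayleigh estimate is too lossy, my fallback is to compute the characteristic polynomials of the $(t+1)\times(t+1)$ quotient matrices of $G$ and $G'$ and show that their difference is positive at $\eta_{1}(G)$.
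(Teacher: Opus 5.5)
The paper gives no proof of this lemma; it is quoted from \cite{LLJ}. So I judge your plan on its own. Your reduction to single moves, the identity for $x^{T}Q_{D}(H)x$, and $\Delta=4n_1x_1^2-(n_j-1)(x_1+x_j)^2$ are all correct. The gap is that $\Delta>0$ carries the whole proof, and the route you sketch for it fails.

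The eigen-equation at a vertex of a clique of size $m$ in $G'$ shows that $(c+2m)x_C$ is the same for every clique $C$, where $c:=\eta_1(G')-2n+s+2$. Hence
\[
\frac{x_j}{x_1}=\frac{c+2n_1+2}{c+2n_j-2}.
\]
This ratio is not close to $1$ when $n_1\gg n_j$. For $t=2$ one gets $c\approx 2\sqrt{(n_1+1)(n_j-1)}$, and then $x_j/x_1\approx\sqrt{(n_1+1)/(n_j-1)}$. So $\Delta>0$ holds only through the square-root balance $(\sqrt{n_j-1}+\sqrt{n_1})^2<4n_1$, not because $n_1>n_j-1$ beats a ratio near $1$. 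Consequently a crude lower bound on $\eta_1(G')$ does not suffice:
\begin{itemize}
\item The bound $4W(G')/n$ fails. Take $s=1$, $t=2$, $p=1$ and the move $(n_1,n_2)=(50,2)\to(51,1)$, which satisfies every hypothesis. It gives only $c\ge 257/53<5$, whereas $\Delta>0$ is equivalent here to $c>\sqrt{50}-1+1/(\sqrt{50}-1)\approx 6.24$.
\item Interlacing with the principal submatrix on the big clique alone gives only $c\ge 0$.
\item Your expectation that $n_1\ge 5p$ and $t\ge s+1$ rescue this is misplaced. The counterexample above already has $t=2$, $s=1$, and $n_1\ge 5p$.
\end{itemize}

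To close the gap, use the principal submatrix of $Q_D(G')$ indexed by $V(K_{n_1+1})\cup V(K_{n_j-1})$. Its two-cell partition is equitable with quotient
\[
\begin{pmatrix} 2n-s-2 & 2(n_j-1)\\ 2(n_1+1) & 2n-s-2\end{pmatrix}.
\]
So Lemmas \ref{le:2.2.} and \ref{le:2.6.} give $c\ge 2\sqrt{(n_1+1)(n_j-1)}>2\sqrt{\alpha\beta}$, where $\alpha=n_1$ and $\beta=n_j-1\ge 1$. By the ratio formula, $\Delta>0$ is equivalent to $\sqrt{\alpha}(c+2\beta)>\sqrt{\beta}(c+\alpha+\beta+1)$. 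That is,
\[
c(\sqrt\alpha-\sqrt\beta)>\sqrt\beta\,[(\sqrt\alpha-\sqrt\beta)^2+1].
\]
Substituting $c>2\sqrt{\alpha\beta}$ reduces this to $\sqrt\beta\,(\alpha-\beta-1)\ge 0$, which is just $n_1\ge n_j$. Hence every move strictly lowers $\eta_1$, and neither $n_1\ge 5p$ nor $t\ge s+1$ is used.

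A cleaner alternative avoids Rayleigh quotients. Eliminating the Perron entries shows that $c$ is the unique root $c\ge s$ of
\[
\sum_i \frac{m_i}{c+2m_i}=\frac{c+n-2s}{2c+2n-3s},
\]
and $c\ge s$ holds because $\eta_1\ge 2(n-1)$. On $[s,\infty)$ the left side minus the right side is strictly decreasing. Since $c>0$, the map $m\mapsto m/(c+2m)$ is strictly concave, so each move toward the big clique strictly lowers this root.
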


	\section{Proof of Theorem \ref{th:1.5.}}
	\hspace{1.3em}
	
	In this section, we will give the proof of Theorem 1.5.\\
	
	\noindent\textbf{Proof of Theorem \ref{th:1.5.}.}

	Suppose that $G$ is a $(k+1)$-connected graph that is not $k$-critical with respect to $[1,b]$-odd factors. Then by Lemma \ref{le:2.1.} (with $f=b$), there exists a subset $S\subseteq V(G)$ such that $|S|=s\geq k$ satisfying $o(G-S)\geq bs - bk + 1$. Since $k\equiv n\equiv s+o(G-S)\pmod{2}$ and $b\equiv 1\pmod{2}$, it follows that $s-k\equiv o(G-S)\pmod{2}$ and $s-k\equiv b(s-k)\pmod{2}.$ Thus, $o(G-S)\geq b(s-k)+2$. This implies $$n\geq s+o(G-S)\geq s+b(s-k)+2\Longrightarrow s\leq\frac{n+bk-2}{b+1}.$$ Then $G$ is a spanning subgraph of $G_1=K_{s}\vee(K_{n_{1}}\cup K_{n_{2}}\cup\cdots\cup K_{n_{bs-bk+2}})$ for some positive odd integers $n_{1}\geq n_{2}\geq\cdots\geq n_{bs-bk+2}$ and $\sum\limits_{i=1}^{bs-bk+2}n_{i}=n-s$.
	By Lemma \ref{le:2.4.}, we obtain
	\begin{align*}
		\mu_{1}(G)\geq\mu_1(G_1)
	\end{align*}
	with equality if and only if $G\cong G_1$.
	If $s=k$, then $o(G-S)\geq 2$, contradicting the $(k+1)$-connectivity of $G$. Thus, $s\geq k+1$. We proceed by considering the following three possible cases.\\

	\textbf{Case 1.} $s\geq\delta+1$.

	Let $G_2=K_s\vee(K_{n-(b+1)s+bk-1}\cup(bs-bk+1)K_1)$. By Lemma \ref{le:2.8.}, we have
	\begin{align*}
		\mu_1(G_1)\geq\mu_1(G_2)
	\end{align*}
	with equality if and only if $(n_1,n_2,\ldots,n_{bs-bk+2})=(n-(b+1)s+bk-1, 1, \ldots, 1)$.

	Consider the partition $V(G_2)=V(K_s)\cup V(K_{n-(b+1)s+bk-1})\cup V((bs-bk+1)K_1)$, the corresponding quotient matrix of $D(G_2)$ equals
	\begin{align*}
		\left(\begin{matrix}	
			s-1 & n-(b+1)s+bk-1 & bs-bk+1 \cr
			s & n-(b+1)s+bk-2 & 2(bs-bk+1) \cr
			s & 2(n-(b+1)s+bk-1) & 2(bs-bk) \cr
		\end{matrix}\right).
	\end{align*}
	Then we obtain that the characteristic polynomial of the corresponding quotient matrix of $D(G_2)$ is given by $g_{D(G_2)}(x)$, where
	\begin{align*}
		g_{D(G_2)}(x)=&x^3+(3+bk-n-bs)x^2+[(2b^2+3b)s^2-(2bn+4b^2k+3bk-3b-3)s+(2bk
		\\&-5)n+2b^2k^2-3bk+6]x-(b^2+b)s^3+(bn+2b^2k+bk+2b^2+b-1)s^2
		\\&-(bkn+2bn-n+b^2k^2+4b^2k+bk-4b-2)s+(2bk-4)n+2b^2k^2-4bk+4.
	\end{align*}
	Since the partition $V(G_2)=V(K_s)\cup V(K_{n-(b+1)s+bk-1})\cup V((bs-bk+1)K_1)$ is equitable, by Lemma \ref{le:2.2.}, the largest root of $g_{D(G_2)}(x)=0$ equals $\mu_{1}(G_2)$.
	
	Let $G^{\prime}=K_\delta \vee(K_{n-(b+1)\delta+bk-1}\cup(b\delta -bk+1)K_1)$.
	Now consider the partition $V(G^{\prime})=V(K_\delta)\cup V(K_{n-(b+1)\delta+bk-1})\cup V((b\delta-bk+1)K_1)$, the corresponding quotient matrix of $D(G^{\prime})$ equals
	\begin{align*}
		\left(\begin{matrix}	
			\delta-1 & n-(b+1)\delta+bk-1 & b\delta-bk+1 \\
			\delta & n-(b+1)\delta+bk-2 & 2(b\delta-bk+1) \\
			\delta & 2(n-(b+1)\delta+bk-1) & 2(b\delta-bk) \\
		\end{matrix}\right).
	\end{align*}
	Then the characteristic polynomial of the corresponding quotient matrix of $D(G^{\prime})$ is given by $g_{D(G^{\prime})}(x)$, where
	\begin{align*}
		g_{D(G^{\prime})}(x)=&x^3+(3+bk-n-b\delta)x^2+[(2b^2+3b)\delta^2-(2bn+4b^2k+3bk-3b-3)\delta+(2bk
		\\&-5)n+2b^2k^2-3bk+6]x-(b^2+b)\delta^3+(bn+2b^2k+bk+2b^2+b-1)\delta^2
		\\&-(bkn+2bn-n+b^2k^2+4b^2k+bk-4b-2)\delta+(2bk-4)n+2b^2k^2-4bk+4.
	\end{align*}
	Since the partition $V(G^{\prime})=V(K_\delta)\cup V(K_{n-(b+1)\delta+bk-1})\cup V((b\delta-bk+1)K_1)$ is equitable, by Lemma \ref{le:2.2.}, the largest root of $g_{D(G^{\prime})}(x)=0$ equals $\mu_{1}(G^{\prime})$. By Lemma \ref{le:2.6.}, we have $\mu_{1}(G^{\prime})\geq\mu_{1}(K_{n-b\delta+bk-1})=n-b\delta+bk-2$.

	In order to prove $\mu_{1}(G_{2})>\mu_{1}(G^{\prime})$, it suffices to show $g_{D(G_2)}(\mu_{1}(G^{\prime}))<0$. Note that $g_{D(G^{\prime})}(\mu_{1}(G^{\prime}))=0$. Hence,
	\begin{align*}
		g_{D(G_2)}(\mu_1(G^{\prime}))=&g_{D(G_2)}(\mu_1(G^{\prime}))-g_{D(G^{\prime})}(\mu_1(G^{\prime}))
		\\=&(\delta-s)[ b\mu_{1}^{2}(G^{\prime})+(b(2n+3k-3(s+\delta+1))-2b^2(s+\delta-2k)-3)
		\\&\cdot \mu_{1}(G^{\prime})+n(bk+2b-1)+(bk-1)(bk+4b+2)
		\\&-(s+\delta)(b(n+k-\delta+1)+b^2(2k-\delta+2)-1)+b(1+b)s^2].
	\end{align*}
	Let $g(x)=bx^2+[b(2n+3k-3(s+\delta+1))-2b^2(s+\delta-2k)-3]x
	+n(bk+2b-1)+(bk-1)(bk+4b+2)-(s+\delta)[b(n+k-\delta+1)+b^2(2k-\delta+2)-1]+b(1+b)s^2$
	be a real function in $x$ with $x\in[n-b\delta+bk-2,+\infty)$. Then the derivative function of $g(x)$ is
	\begin{align*}
		g^{\prime}(x)=2bx+b(2n+3k-3(s+\delta+1))-2b^2(s+\delta-2k)-3.
	\end{align*}
	Hence, $\frac{2b^2(s+\delta-2k)+3-b(2n+3k-3(s+\delta+1))}{2b}$ is the unique solution of $g^{\prime}(x)=0$.
	\\Together with
	$n\geq s+b(s-k)+2$ and $n\geq (2b^2+3b+11)\delta-(2b^2+\frac{5}{2}b+\frac{3}{2})k+\frac{3}{2}b+2+\frac{3}{2b},$
	we have
	\begin{align*}
		&(n-b\delta+bk-2)-\frac{2b^2(s+\delta-2k)+3-b[ 2n+3k-3(s+\delta+1)]}{2b}
		\\=&\frac{4bn+(6b^2+3b)k-(2b^2+3b)s-(4b^2+3b)\delta-7b-3}{2b}
		\\\geq& \frac{bn+(6b^2+3b)k-(2b^2+3b)s-(4b^2+3b)\delta-3-7b+3bs+3b^2s-3b^2k+6b}{2b}
		\\=&\frac{bn+(3b^2+3b)k-(4b^2+3b)\delta+b^2s-b-3}{2b}
		\\\geq& \frac{(2b^3+3b^2+11b)\delta-(2b^3+\frac{5}{2}b^2+\frac{3}{2}b)k+\frac{3}{2}b^2+2b+\frac{3}{2}+(3b^2+3b)k-(4b^2+3b)\delta+b^2}{2b}\nonumber
		\\&\frac{+b^2-b-3}{2b}
		\\=&\frac{(2b^3+8b)\delta-(2b^3-\frac{1}{2}b^2-\frac{3}{2}b)k+\frac{5}{2}b^2+b-3}{2b}
		\\\geq&\frac{\frac{1}{2}b^2k+2b^3+\frac{5}{2}b^2-\frac{1}{2}b-\frac{3}{2}}{2b}
		\\>&0.
	\end{align*}
	Consequently, $g(x)$ is monotonically increasing in the interval $[n-b\delta+bk-2,+\infty).$ We get
	\begin{align*}
		g(\mu_1(G^{\prime}))\geq& g(n-b\delta+bk-2)
		\\=&b(b+1)s^2+[-2b(b+2)n-b(2b^2+5b+1)k+b(2b^2+4b+1)\delta+2b^2+5b+1]s
		\\&+3bn^2+[(8k-6\delta)b^2-(4\delta-4k+9)b-4]n+(5b^3+4b^2)k^2-b[(8b^2+8b+1)\delta
		\\&+11b+8]k+b(3b^2+4b+1)\delta^2+(9b^2+8b+1)\delta+6b+4.
	\end{align*}
	Let $g_{1}(x)=b(b+1)x^2+[-2b(b+2)n-b(2b^2+5b+1)k+b(2b^2+4b+1)\delta+2b^2+5b+1]x+3bn^2+[(8k-6\delta)b^2-(4\delta-4k+9)b-4]n+(5b^3+4b^2)k^2-b[(8b^2+8b+1)\delta+11b+8]k+b(3b^2+4b+1)\delta^2+(9b^2+8b+1)\delta+6b+4$ be a real function in $x$ with $x\in[\delta+1,\frac{n+bk-2}{b+1}].$
	It is routine to check that the derivative function of $g_{1}(x)$ is
	\begin{align*}
		g_{1}^{\prime}(x)=2b(b+1)x-2b(b+2)n-b(2b^2+5b+1)k+b(2b^2+4b+1)\delta+2b^2+5b+1.
	\end{align*}
	Hence, $\frac{2b(b+2)n+b(2b^2+5b+1)k-b(2b^2+4b+1)\delta-2b^2-5b-1}{2b(b+1)}$ is the unique solution of $g_{1}^{\prime}(x)=0$.
	Clearly,
	$$\frac{2b(b+2)n+b(2b^2+5b+1)k-b(2b^2+4b+1)\delta-2b^2-5b-1}{2b(b+1)}>\frac{n+bk-2}{b+1}.$$
	Then $g_{1}(x)$ is monotonically decreasing in $x$ with $x\in[\delta+1,\frac{n+bk-2}{b+1}]$. Therefore,
	\begin{align*}
		g_1(s)\geq&g_1(\frac{n+bk-2}{b+1})
		\\=&\frac{1}{b+1}[b^2n^2+((4b^3+5b^2+3b)k-(4b^3+6b^2+3b)\delta-3b^2-4b-3)n
		\\&+(3b^4+5b^3+3b^2)k^2-((6b^4+12b^3+8b^2+b)\delta+5b^3+8b^2+5b)k
		\\&+(3b^4+7b^3+5b^2+b)\delta^2+(5b^3+9b^2+7b+1)\delta+2b^2+4b+2].
	\end{align*}
	Let $g_2(x)=b^2x^2+[(4b^3+5b^2+3b)k-(4b^3+6b^2+3b)\delta-3b^2-4b-3]x
	+(3b^4+5b^3+3b^2)k^2-[(6b^4+12b^3+8b^2+b)\delta+5b^3+8b^2+5b]k
	+(3b^4+7b^3+5b^2+b)\delta^2+(5b^3+9b^2+7b+1)\delta+2b^2+4b+2$ be a real function in $x$ with $x\in[(2b^2+3b+11)\delta-(2b^2+\frac{5}{2}b+\frac{3}{2})k+\frac{3}{2}b+2+\frac{3}{2b},+\infty)$. We may obtain the derivative function of $g_2(x)$ is
	\begin{align*}
		g_{2}^{\prime}(x)=2b^2x+(4b^3+5b^2+3b)k-(4b^3+6b^2+3b)\delta-3b^2-4b-3.
	\end{align*}
	Hence, $\frac{(4b^3+6b^2+3b)\delta+3b^2+4b+3-(4b^3+5b^2+3b)k}{2b^2}$ is the unique solution of $g_{2}^{\prime}(x)=0$. Clearly,
	\begin{align*}
		\frac{(4b^3+6b^2+3b)\delta+3b^2+4b+3-(4b^3+5b^2+3b)k}{2b^2}<&(2b^2+3b+11)\delta-(2b^2+\frac{5}{2}b+\frac{3}{2})k
		\\&+\frac{3}{2}b+2+\frac{3}{2b}.
	\end{align*}
	Then $g_{2}(x)$ is monotonically increasing in the interval $[(2b^2+3b+11)\delta-(2b^2+\frac{5}{2}b+\frac{3}{2})k+\frac{3}{2}b+2+\frac{3}{2b}, +\infty)$. Thus,
	\begin{align*}
		g_2(n)\geq&g_2((2b^2+3b+11)\delta-(2b^2+\frac{5}{2}b+\frac{3}{2})k+\frac{3}{2}b+2+\frac{3}{2b})
		\\=&(4b^6+4b^5+32b^4+5b^3+51b^2-32b)\delta^2-[(8b^6+6b^5+27b^4-16b^3-\frac{79}{2}b^2-\frac{73}{2}b)k
		\\&-6b^5-5b^4-22b^3+\frac{23}{2}b^2+28b+\frac{73}{2}]\delta+(4b^6+2b^5-\frac{19}{4}b^4-12b^3-\frac{39}{4}b^2-\frac{9}{2}b)k^2
		\\&-(6b^5+\frac{7}{2}b^4-\frac{11}{2}b^3-\frac{39}{2}b^2-\frac{35}{2}b-9)k+\frac{9}{4}b^4+\frac{3}{2}b^3-\frac{3}{2}b^2-7b-\frac{9}{2b}-\frac{31}{4}.
	\end{align*}
	Let $g_3(x)=(4b^6+4b^5+32b^4+5b^3+51b^2-32b)x^2-[(8b^6+6b^5+27b^4-16b^3-\frac{79}{2}b^2-\frac{73}{2}b)k
	\\-6b^5-5b^4-22b^3+\frac{23}{2}b^2+28b+\frac{73}{2}]x+(4b^6+2b^5-\frac{19}{4}b^4-12b^3-\frac{39}{4}b^2-\frac{9}{2}b)k^2
	-(6b^5+\frac{7}{2}b^4-\frac{11}{2}b^3-\frac{39}{2}b^2-\frac{35}{2}b-9)k+\frac{9}{4}b^4+\frac{3}{2}b^3-\frac{3}{2}b^2-7b-\frac{9}{2b}-\frac{31}{4}$ be a real function in $x$ with $x\in[k+1, s-1]$. We may obtain the derivative function of $g_3(x)$ as
	\begin{align*}
		g_{3}^{\prime}(x)=&(8b^6+8b^5+64b^4+10b^3+102b^2-64b)x-(8b^6+6b^5+27b^4-16b^3-\frac{79}{2}b^2-\frac{73}{2}b)k
		\\&+6b^5+5b^4+22b^3-\frac{23}{2}b^2-28b-\frac{73}{2}.
	\end{align*}
	Hence, $\frac{(8b^6+6b^5+27b^4-16b^3-\frac{79}{2}b^2-\frac{73}{2}b)k-6b^5-5b^4-22b^3+\frac{23}{2}b^2+28b+\frac{73}{2}}{8b^6+8b^5+64b^4+10b^3+102b^2-64b}$ is the unique solution of $g_{3}^{\prime}(x)=0$. Clearly,
	$$\frac{(8b^6+6b^5+27b^4-16b^3-\frac{79}{2}b^2-\frac{73}{2}b)k-6b^5-5b^4-22b^3+\frac{23}{2}b^2+28b+\frac{73}{2}}{8b^6+8b^5+64b^4+10b^3+102b^2-64b}<k+1.$$
	This implies that $g_{3}(x)$ is monotonically increasing in the interval $[k+1, s-1]$. Thus,
	\begin{align*}
		g_3(\delta)\geq&g_3(k+1)
		\\=&(\frac{1}{4}b^4+9b^3+\frac{323}{4}b^2)k^2+(2b^5+\frac{77}{2}b^4+\frac{107}{2}b^3+\frac{299}{2}b^2-38b-\frac{55}{2})k
		\\&+4b^6+10b^5+\frac{157}{4}b^4+\frac{57}{2}b^3+38b^2-67b-\frac{9}{2b}-\frac{177}{4}
		\\>&0.
	\end{align*}
	Therefore, $g(\mu_1(G^{\prime}))>0$ and $g_{D(G_2)}(\mu_1(G^{\prime}))<0$. Hence, $\mu_1(G_2)>\mu_1(G^{\prime})$. Combining $\mu_1(G)\geq\mu_1(G_1)\geq\mu_1(G_2)$, we may conclude that $\mu_1(G)>\mu_1(G^{\prime})$, a contradiction to the condition.\\

	\textbf{Case 2.} $s=\delta$.

	By Lemma \ref{le:2.8.}, we have $\mu_{1}(G_1)\geq \mu_1(K_{\delta}\vee(K_{n-(b+1)\delta+bk-1}\cup (b\delta-bk+1)K_{1}))$
	with equality if and only if $G_1\cong K_{\delta}\vee(K_{n-(b+1)\delta+bk-1}\cup (b\delta-bk+1)K_{1})$. Combining $\mu_1(G)\geq\mu_1(G_1)$, we get $\mu_{1}(G)\geq \mu_1(K_{\delta}\vee(K_{n-(b+1)\delta+bk-1}\cup (b\delta-bk+1)K_{1}))$ with equality if and only if $G\cong K_{\delta}\vee(K_{n-(b+1)\delta+bk-1}\cup (b\delta-bk+1)K_{1})$. Observe that $K_{\delta}\vee(K_{n-(b+1)\delta+bk-1}\cup (b\delta-bk+1)K_{1})$ is not $k$-critical with respect to $[1,b]$-odd factors, a contradiction.\\

	\textbf{Case 3.} $s\leq\delta-1$.
	
	Recall that $G$ is a spanning subgraph of $G_1=K_{s}\vee(K_{n_{1}}\cup K_{n_{2}}\cup\cdots\cup K_{n_{bs-bk+2}})$ where $n_{1}\geq n_{2}\geq\cdots\geq n_{bs-bk+2}$ and $\sum\limits_{i=1}^{bs-bk+2}n_{i}=n-s$. Clearly, $s+n_{bs-bk+2}-1\geq\delta$ because the minimum degree of $G_1$ is at least $\delta$. Let $G_3=K_s\vee(K_{n-s-(\delta+1-s)(bs-bk+1)}\cup(bs-bk+1)K_{\delta+1-s}).$ By Lemma \ref{le:2.8.}, we have $\mu_1(G_1)\geq\mu_1(G_3)$
	with equality if and only if $(n_1,n_2,\ldots,n_{bs-bk+2} )=(n-s-(\delta+1-s)(bs-bk+1), \delta+1-s, \ldots, \delta+1-s)$.

	Consider the partition $V(G_3)=V(K_s)\cup V(K_{n-s-(\delta+1-s)(bs-bk+1)})\cup V((bs-bk+1)K_{\delta+1-s})$, the corresponding quotient matrix of $D(G_3)$ equals
	\begin{align*}
		\left(\begin{matrix}	
			s-1 & n-s-(\delta+1-s)(bs-bk+1) & (bs-bk+1)(\delta+1-s) \cr
			s &n-s-(\delta+1-s)(bs-bk+1)-1 & 2(bs-bk+1)(\delta+1-s) \cr
			s & 2(n-s-(\delta+1-s)(bs-bk+1)) & \delta-s+2(bs-bk)(\delta+1-s) \cr
		\end{matrix}\right).
	\end{align*}
	Then we obtain that the characteristic polynomial of the corresponding quotient matrix of $D(G_3)$ is given by $g_{D(G_3)}(x)$, where
	\begin{align*}
		g_{D(G_3)}(x)=&x^3+[-n+bs^2-(bk+b\delta+b)s+(b\delta+b)k+3]x^2+[2b^2s^4-(4b^2\delta+4b^2k+4b^2
		\\&-2b)s^3+(2bn+(8b^2\delta+8b^2-2b)k+2b^2k^2+2b^2\delta^2+(4b^2-7b)\delta+2b^2-5b)s^2
		\\&+(-(2bk+2b\delta+2b-3)n-(4b^2\delta+4b^2)k^2-(4b^2\delta^2+(8b^2-7b)\delta+4b^2-5b)k
		\\&+5b\delta^2+(8b-3)\delta+3b-3)s+((2b\delta+2b)k-3\delta-5)n+(2b^2\delta^2+4b^2\delta+2b^2)
		\\&\cdot k^2-(5b\delta^2+8b\delta+3b)k+3\delta^2+6\delta+6]x-b^2s^5+(2b^2k+2b^2\delta+4b^2-b)s^4
		\\&-[bn+(4b^2\delta+8b^2-b)k+b^2k^2+b^2\delta^2+(6b^2-3b)\delta+5b^2-5b]s^3+[(bk+b\delta
		\\&+3b-1)n+(2b^2\delta+4b^2)k^2+(2b^2\delta^2+(12b^2-3b)\delta+10b^2-5b)k+(2b^2-2b)
		\\&\cdot\delta^2+(4b^2-11b+1)\delta+2b^2-8b+1]s^2+[-(b^2\delta^2+6b^2\delta+5b^2)k^2-((4b^2-2b)
		\\&\cdot\delta^2+(8b^2-11b)\delta+4b^2-8b)k-((b\delta+3b)k+(2b-1)\delta+2b-4)n+(5b-1)
		\\&\cdot\delta^2+(9b-5)\delta+4b-4]s+[(2b\delta+2b)k-3\delta-4]n+(2b^2\delta^2+4b^2\delta+2b^2)k^2
		\\&-(5b\delta^2+9b\delta+4b)k+3\delta^2+6\delta+4.
	\end{align*}
	Since the partition $V(G_3)=V(K_s)\cup V(K_{n-s-(\delta+1-s)(bs-bk+1)})\cup V((bs-bk+1)K_{\delta+1-s})$ is equitable, by Lemma \ref{le:2.2.}, the largest root of $g_{D(G_3)}(x)=0$ equals $\mu_{1}(G_3)$.
	
	Let $G^{\prime}=K_\delta \vee(K_{n-(b+1)\delta+bk-1}\cup(b\delta -bk+1)K_1)$.
	Now consider the partition $V(G^{\prime})=V(K_\delta)\cup V(K_{n-(b+1)\delta+bk-1})\cup V((b\delta-bk+1)K_1)$, the corresponding quotient matrix of $D(G^{\prime})$ equals
	\begin{align*}
		\left(\begin{matrix}	
			\delta-1 & n-(b+1)\delta+bk-1 & b\delta-bk+1 \\
			\delta & n-(b+1)\delta+bk-2 & 2(b\delta-bk+1) \\
			\delta & 2(n-(b+1)\delta+bk-1) & 2(b\delta-bk) \\
		\end{matrix}\right).
	\end{align*}
	Then the characteristic polynomial of the corresponding quotient matrix of $D(G^{\prime})$ is given by $g_{D(G^{\prime})}(x)$, where
	\begin{align*}
		g_{D(G^{\prime})}(x)=&x^3+(3+bk-n-b\delta)x^2+[(2b^2+3b)\delta^2-(2bn+4b^2k+3bk-3b-3)\delta
		\\&+(2bk-5)n+2b^2k^2-3bk+6]x-(b^2+b)\delta^3+(bn+2b^2k+bk+2b^2+b-1)\delta^2
		\\&-(bkn+2bn-n+b^2k^2+4b^2k+bk-4b-2)\delta+(2bk-4)n+2b^2k^2-4bk+4.
	\end{align*}
	It is easy to see the partition
	$V(G^{\prime})=V(K_\delta)\cup V(K_{n-(b+1)\delta+bk-1})\cup V((b\delta-bk+1)K_1)$ is equitable, by Lemma \ref{le:2.2.}, the largest root of $g_{D(G^{\prime})}(x)=0$ equals $\mu_{1}(G^{\prime})$. By Lemma \ref{le:2.6.}, we get $\mu_{1}(G^{\prime})\geq\mu_{1}(K_{n-b\delta+bk-1})=n-b\delta+bk-2.$
	
	In order to prove $\mu_{1}(G_{3})>\mu_{1}(G^{\prime})$, it suffices to show $g_{D(G_3)}(\mu_{1}(G^{\prime}))<0$. Note that $g_{D(G^{\prime})}(\mu_{1}(G^{\prime}))=0$. Hence,
	\begin{align*}
		g_{D(G_3)}(\mu_1(G^{\prime}))=&g_{D(G_3)}(\mu_1(G^{\prime}))-g_{D(G^{\prime})}(\mu_1(G^{\prime}))
		\\=&(s-\delta)[ b(s-k-1)\mu_{1}^{2}(G^{\prime})+(2 b^2 s^3+
		(2 b - 4 b^2 - 4 b^2 k - 2 b^2\delta)s^2+ ( 2 b n
		\\& + 2 b^2 k^2  +
		( 4 b^2 \delta+ 8 b^2-2 b  )	k - 5 b\delta+ 2 b^2-5 b ) s+ (3 - 2 b - 2 b k) n  - (4 b^2
		\\& + 2 b^2\delta) k^2 +
		(5  - 4 b^2 + 5 b\delta)k   + ( 2 b^2+ 3 b-3)\delta + 3 b-3)\mu_{1}(G^{\prime})-b^2s^4
		\\&+(2b^2k+b^2\delta+4b^2-b)s^3-(bn+b^2k^2+(2b^2\delta+8b^2-b)k+(2b^2-2b)\delta
		\\&+5b^2-5b)s^2+((bk+3b-1)n+(b^2\delta+4b^2)k^2+((4b^2-2b)\delta+10b^2-5b)
		\\&\cdot k-(b^2+6b-1)\delta+2b^2-8b+1)s-(3bk-b\delta+2b-4)n-(5b^2+2b^2\delta)
		\\&\cdot k^2+((2b^2+6b)\delta-4b^2+8b)k-(b^2+b)\delta^2+(2b^2+b-4)\delta+4b-4].
	\end{align*}
	Let $h(x)=b(s-k-1)x^2+[2 b^2 s^3+
	(2 b - 4 b^2 - 4 b^2 k - 2 b^2\delta)s^2+ ( 2 b n
	+ 2 b^2 k^2  +
	( 4 b^2 \delta+ 8 b^2
	\\-2 b  )	k - 5 b\delta+ 2 b^2-5 b ) s+ (3 - 2 b - 2 b k) n  - (4 b^2
	+ 2 b^2\delta) k^2 +
	(5b  - 4 b^2 + 5 b\delta)k   + ( 2 b^2+ 3 b-3)\delta + 3 b
	\\-3]x-b^2s^4
	+(2b^2k+b^2\delta+4b^2-b)s^3-[bn+b^2k^2+(2b^2\delta+8b^2-b)k+(2b^2-2b)\delta
	+5b^2-5b]s^2
	\\+[(bk+3b-1)n+(b^2\delta+4b^2)k^2+((4b^2-2b)\delta+10b^2-5b)
	k-(b^2+6b-1)\delta+2b^2-8b+1]s
	\\-(3bk-b\delta+2b-4)n-(5b^2+2b^2\delta)
	k^2+[(2b^2+6b)\delta-4b^2+8b]k-(b^2+b)\delta^2+(2b^2+b-4)\delta+4b-4$
	be a real function in $x$ with $x\in[n-b\delta+bk-2,+\infty)$. Then the derivative function of $h(x)$ is
	\begin{align*}
		h^{\prime}(x)=&2b(s-k-1)x+2 b^2 s^3+
		(2 b - 4 b^2 - 4 b^2 k - 2 b^2\delta)s^2+ [ 2 b n
		+ 2 b^2 k^2
		+
		( 4 b^2 \delta+ 8 b^2
		\\&-2 b  )	k - 5 b\delta+ 2 b^2-5 b ] s+ (3 - 2 b - 2 b k) n  - (4 b^2
		+ 2 b^2\delta) k^2 +
		(5b  - 4 b^2 + 5 b\delta)k
		\\&  + ( 2 b^2+ 3 b-3)\delta + 3 b
		-3.
	\end{align*}
	Together with $k+1\leq s\leq \delta-1$ and $n\geq \frac{2}{3}b^2\delta^3+\frac{4}{3}b^2k\delta^2,$ we get
	\begin{align*}
		h^{\prime}(x)\geq& h(n-b\delta+bk-2)
		\\=&2b^2s^3-(4b^2k+2b^2\delta+4b^2-2b)s^2+[4bn+2b^2k^2+(4b^2\delta+10b^2-2b)k
		-(5b+2b^2)\delta
		\\&+2b^2-9b]s
		-(4bk+4b-3)n-(6b^2+2b^2\delta)k^2+[(2b^2+5b)\delta-6b^2+9b]k+(4b^2
		\\&+3b-3)\delta+7b-3
		\\\geq&2b^2(k+1)^3-(4b^2k+2b^2\delta+4b^2-2b)(\delta-1)^2+[4bn+2b^2k^2+(4b^2\delta+10b^2-2b)k
		\\&-(5b+2b^2)\delta+2b^2-9b](k+1)
		-(4bk+4b-3)n-(6b^2+2b^2\delta)k^2+[(2b^2+5b)\delta
		\\&-6b^2+9b]k+(4b^2+3b-3)\delta+7b-3
		\\=&3n-2b^2\delta^3-(4b^2k-2b)\delta^2+(2b^2k^2+12b^2k+8b^2-6b-3)\delta+4b^2k^3+(12b^2-2b)k^2
		\\&+(8b^2-2b)k-3
		\\\geq&3(\frac{2}{3}b^2\delta^3+\frac{4}{3}b^2k\delta^2)-2b^2\delta^3-(4b^2k-2b)\delta^2+(2b^2k^2+12b^2k+8b^2-6b-3)\delta+4b^2k^3
		\\&+(12b^2-2b)k^2+(8b^2-2b)k-3
		\\=&2b^2\delta+(2b^2k^2+12b^2k+8b^2-6b-3)\delta+4b^2k^3+(12b^2-2b)k^2+(8b^2-2b)k-3
		\\>&0.
	\end{align*}
	This implies that $h(x)$ is monotonically increasing in the interval  $[n-b\delta+bk-2,+\infty)$. Therefore,
	\begin{align*}
		h(\mu_{1}(G^{\prime}))\geq&h(n-b\delta+bk-2)
		\\=&-b^2s^4+[2b^2n+(2b^3+2b^2)k+(b^2-2b^3)\delta-b]s^3+[-(4b^2k+2b^2\delta+4b^2-b)n
		\\&-(4b^3+b^2)k^2+((2b^3-2b^2)\delta-4b^3+2b^2+b)k+2b^3\delta^2+(4b^3+2b)\delta+3b^2+b]
		\\&\cdot s^2+[3bn^2+(2b^2k^2+(4b^2\delta+12b^2-b)k-(4b^2+5b)\delta+2b^2-10b-1)n
		\\&+2b^3k^3+((2b^3+b^2)\delta+9b^3-2b^2)k^2-(4b^3\delta^2+(10b^3+7b^2+2b)\delta-2b^3+15b^2
		\\&+b)k+(b^3+5b^2)\delta^2-(2b^3-8b^2-4b-1)\delta-2b^2+6b+1]s-(3bk+3b-3)n^2
		\\&-[(2b^2\delta+8b^2)k^2-((4b^2+5b)\delta-8b^2+13b)k-(6b^2+b-3)\delta-9b+5]n
		\\&-(2b^3\delta+5b^3)k^3+[2b^3\delta^2+(7b^2+6b^3)\delta-5b^3+12b^2]k^2-[(b^3+5b^2)\delta^2-(8b^3
		\\&-4b^2-7b)\delta-11b^2+9b]k-(3b^3+4b^2-2b)\delta^2-(9b^2+2b-2)\delta-6b+2
		\\\geq&-b^2(\delta-1)^4+[2b^2n+(2b^3+2b^2)k+(b^2-2b^3)\delta-b](k+1)^3+[-(4b^2k+2b^2\delta
		\\&+4b^2-b)n-(4b^3+b^2)k^2+((2b^3-2b^2)\delta-4b^3+2b^2+b)k+2b^3\delta^2+(4b^3
		\\&+2b)\delta+3b^2+b] (\delta-1)^2+[3bn^2+(2b^2k^2+(4b^2\delta+12b^2-b)k-(4b^2+5b)\delta
		\\&+2b^2-10b-1)n+2b^3k^3+((2b^3+b^2)\delta+9b^3-2b^2)k^2-(4b^3\delta^2+(10b^3+7b^2
		\\&+2b)\delta-2b^3+15b^2+b)k+(b^3+5b^2)\delta^2-(2b^3-8b^2-4b-1)\delta-2b^2+6b+1]
		\\&\cdot(k+1)-(3bk+3b-3)n^2-[(2b^2\delta+8b^2)k^2-((4b^2+5b)\delta-8b^2+13b)k
		\\&-(6b^2+b-3)\delta-9b+5]n-(2b^3\delta+5b^3)k^3+[2b^3\delta^2+(7b^2+6b^3)\delta-5b^3
		\\&+12b^2]k^2-[(b^3+5b^2)\delta^2-(8b^3-4b^2-7b)\delta-11b^2+9b]k-(3b^3+4b^2-2b)\delta^2
		\\&-(9b^2+2b-2)\delta-6b+2
		\\=&3n^2+[4b^2k^3+(2b^2\delta+12b^2-b)k^2-(4b^2\delta^2-12b^2\delta-8b^2-2b+1)k-2b^2\delta^3
		\\&+b\delta^2+(8b^2-6b-3)\delta-6]n+(4b^3+2b^2)k^4+[(2b^2-2b^3)\delta+12b^3+4b^2-b]
		\\&\cdot k^3-[(6b^3+b^2)\delta^2-(6b^2-2b)\delta-8b^3+4b]k^2+[(2b^3-2b^2)\delta^3-(12b^3-6b^2
		\\&-b)\delta^2-(6b^2+7b-1)\delta-2b^2-6b+1]k+(2b^3-b^2)\delta^4+(4b^2+2b)\delta^3-(8b^3
		\\&+2b^2+b)\delta^2-(2b^2-2b-3)\delta+3.
	\end{align*}
	Let $h_{1}(x)=3x^2+[4b^2k^3+(2b^2\delta+12b^2-b)k^2-(4b^2\delta^2-12b^2\delta-8b^2-2b+1)k-2b^2\delta^3+b\delta^2+(8b^2-6b-3)\delta-6]x+(4b^3+2b^2)k^4+[(2b^2-2b^3)\delta+12b^3+4b^2-b]k^3-[(6b^3+b^2)\delta^2-(6b^2-2b)\delta-8b^3+4b]k^2+[(2b^3-2b^2)\delta^3-(12b^3-6b^2-b)\delta^2-(6b^2+7b-1)\delta-2b^2-6b+1]k+(2b^3-b^2)\delta^4+(4b^2+2b)\delta^3-(8b^3+2b^2+b)\delta^2-(2b^2-2b-3)\delta+3$ be a real function in $x$ with $x\in[\frac{2}{3}b^2\delta^3+\frac{4}{3}b^2k\delta^2,+\infty).$
	We may obtain the derivative function of $h_{1}(x)$ is
	\begin{align*}
		h_{1}^{\prime}(x)=&6x+4b^2k^3+(2b^2\delta+12b^2-b)k^2-(4b^2\delta^2-12b^2\delta-8b^2-2b+1)k-2b^2\delta^3+b\delta^2
		\\&+(8b^2-6b-3)\delta-6.
	\end{align*}
	Hence, $\frac{-4b^2k^3-(2b^2\delta+12b^2-b)k^2+(4b^2\delta^2-12b^2\delta-8b^2-2b+1)k+2b^2\delta^3-b\delta^2-(8b^2-6b-3)\delta+6}{6}$ is the unique solution of $h_{1}^{\prime}(x)=0$.
	Clearly,
	\begin{align*}
		&	6(\frac{2}{3}b^2\delta^3+\frac{4}{3}b^2k\delta^2)+4b^2k^3+(2b^2\delta+12b^2-b)k^2-(4b^2\delta^2-12b^2\delta-8b^2-2b+1)k-2b^2\delta^3
		\\&+b\delta^2+(8b^2-6b-3)\delta+6>0.
	\end{align*}
	This implies that $h_{1}(x)$ is monotonically increasing in the interval $[\frac{2}{3}b^2\delta^3+\frac{4}{3}b^2k\delta^2,+\infty)$. Therefore,
	\begin{align*}
		h_1(n)\geq&h_1(\frac{2}{3}b^2\delta^3+\frac{4}{3}b^2k\delta^2)
		\\=&\frac{2}{3}b^3\delta^5+[\frac{4}{3}b^4k^2+(\frac{4}{3}b^3+8b^4)k+\frac{16}{3}b^4-2b^3-3b^2]\delta^4+[\frac{16}{3}b^4k^4+(24b^4-\frac{2}{3}b^3)k^2
		\\&+(16b^4-\frac{14}{3}b^3-\frac{20}{3}b^2)k+2b]\delta^3+[\frac{16}{3}b^4k^4+(16b^4-\frac{4}{3}b^3)k^3+(\frac{32}{3}b^4-\frac{10}{3}b^3-\frac{7}{3}b^2)
		\\&\cdot k^2-(12b^3+2b^2-b)k-8b^3-2b^2-b]\delta^2+[(2b^2-2b^3)k^3+(6b^2-2b)k^2+(1-7b
		\\&-6b^2)k-2b^2+2b+3]\delta+(4b^3+2b^3)k^4+(12b^3+4b^2-b)k^3+(8b^3-4b)k^2-(2b^2
		\\&+6b-1)k+3
		\\>&0.
	\end{align*}
	
	Therefore, $h(\mu_1(G^{\prime}))>0$ and $g_{D(G_3)}(\mu_1(G^{\prime}))<0$. Hence, $\mu_1(G_3)>\mu_1(G^{\prime})$. Combining $\mu_1(G)\geq\mu_1(G_1)\geq\mu_1(G_3)$, we may conclude that $\mu_1(G)>\mu_1(G^{\prime})$, a contradiction to the condition.	
	
	This completes the proof.
	$\hfill\square$\\
	
	\section{Proof of Theorem \ref{th:1.6.} }
	\hspace{1.3em}
	In this section, we will give the proof of Theorem 1.6.\\
	
	\noindent\textbf{Proof of Theorem \ref{th:1.6.}.}
	
	Suppose that $G$ is a $(k+1)$-connected graph that is not $k$-critical with respect to $[1,b]$-odd factors. Then by Lemma \ref{le:2.1.} (with $f=b$), there exists a subset $S\subseteq V(G)$ such that $|S|=s\geq k$ satisfying $o(G-S)\geq bs-bk+1$. Since $k\equiv n\equiv s+o(G-S)\pmod{2}$ and $b\equiv 1\pmod{2}$, it follows that $s-k\equiv o(G-S)\pmod{2}$ and $s-k\equiv b(s-k)\pmod{2}.$ Thus, $o(G-S)\geq b(s-k)+2$. This implies $$n\geq s+o(G-S)\geq s+b(s-k)+2\Longrightarrow s\leq\frac{n+bk-2}{b+1}.$$ Then $G$ is a spanning subgraph of $G_1=K_{s}\vee(K_{n_{1}}\cup K_{n_{2}}\cup\cdots\cup K_{n_{bs-bk+2}})$ for some positive odd integers $n_{1}\geq n_{2}\geq\cdots\geq n_{bs-bk+2}$ and $\sum\limits_{i=1}^{bs-bk+2}n_{i}=n-s$.
	By Lemma \ref{le:2.5.}, we obtain
	\begin{align*}
		\eta_{1}(G)\geq\eta_1(G_1)
	\end{align*}
	with equality if and only if $G\cong G_1$.
	If $s=k$, then $o(G-S)\geq 2$, contradicting the $(k+1)$-connectivity of $G$. Thus, $s\geq k+1$. We proceed by considering the following three possible cases.\\

	\textbf{Case 1.} $s\geq\delta+1$.
	
	Let $G_2=K_s\vee(K_{n-(b+1)s+bk-1}\cup(bs-bk+1)K_1)$. By Lemma \ref{le:2.9.}, we have
	\begin{align*}
		\eta_1(G_1)\geq\eta_1(G_2)
	\end{align*}
	with equality if and only if $(n_1,n_2,\ldots,n_{bs-bk+2})=(n-(b+1)s+bk-1,1,\ldots,1)$.

	Now consider the partition $V(G_2)=V(K_s)\cup V(K_{n-(b+1)s+bk-1})\cup V((bs-bk+1)K_1)$, the corresponding quotient matrix of $Q_{D}(G_2)$ is
	\begin{align*}
		\left(\begin{matrix}	
			n+s-2 & n-(b+1)s+bk-1 & bs-bk+1 \cr
			s & 2n-s-2 & 2(bs-bk+1) \cr
			s & 2(n-(b+1)s+bk-1) & 2(n-bk-1)+(2b-1)s \cr
		\end{matrix}\right).
	\end{align*}
	Then we obtain that the characteristic polynomial of the corresponding quotient matrix of $Q_{D}(G_2)$ is given by $g_{Q_{D}(G_2)}(x)$, where
	\begin{align*}
		g_{Q_{D}(G_2)}(x)=&x^3+[(1-2b)s-5n+2bk+6]x^2+[(4b^2+4b)s^2+((2b-3)n-(8b^2+4b)k
		\\&+8)s+8n^{2}-(2bk+24)n+4b^2k^2+16]x-2b^2s^3+[-(4b^2+4b)n+4b^2k+8b^2
		\\&+6b]s^2+[2n^2+((8b^2+4b)k-4b-10)n-2b^2k^2-(16b^2+6b)k+8b+12]s
		\\&-4n^3+20n^2-(4b^2k^2-4bk+32)n+8b^2k^2-8bk+16.
	\end{align*}
	Since the partition $V(G_2)=V(K_s)\cup V(K_{n-(b+1)s+bk-1})\cup V((bs-bk+1)K_1)$ is equitable, by Lemma \ref{le:2.2.}, the largest root of $g_{Q_{D}(G_2)}(x)=0$ equals $\eta_{1}(G_2)$.
	
	Let $G^{\prime}=K_\delta \vee(K_{n-(b+1)\delta+bk-1}\cup(b\delta -bk+1)K_1)$.
	Now consider the partition $V(G^{\prime})=V(K_\delta)\cup V(K_{n-(b+1)\delta+bk-1})\cup V((b\delta-bk+1)K_1)$, the corresponding quotient matrix of $Q_{D}(G^{\prime})$ is
	\begin{align*}
		\left(\begin{matrix}	
			n+\delta-2 & n-(b+1)\delta+bk-1 & b\delta-bk+1 \cr
			\delta & 2n-\delta-2 & 2(b\delta-bk+1) \cr
			\delta & 2(n-(b+1)\delta+bk-1) & 2(n-bk-1)+(2b-1)\delta \cr
		\end{matrix}\right).
	\end{align*}
	Then the characteristic polynomial of the corresponding quotient matrix of $Q_{D}(G^{\prime})$ is given by $g_{Q_{D}(G^{\prime})}(x),$ where
	\begin{align*}
		g_{Q_{D}(G^{\prime})}(x)=&x^3+[(1-2b)\delta-5n+2bk+6]x^2+[(4b^2+4b)\delta^2+((2b-3)n-(8b^2+4b)k
		\\&+8)\delta+8n^{2}-(2bk+24)n+4b^2k^2+16]x-2b^2\delta^3+[-(4b^2+4b)n+4b^2k+8b^2
		\\&+6b]\delta^2+[2n^2+((8b^2+4b)k-4b-10)n-2b^2k^2-(16b^2+6b)k+8b+12]\delta
		\\&-4n^3+20n^2-(4b^2k^2-4bk+32)n+8b^2k^2-8bk+16.
	\end{align*}
	Observe that the partition $V(G^{\prime})=V(K_\delta)\cup V(K_{n-(b+1)\delta+bk-1})\cup V((b\delta-bk+1)K_1)$ is equitable, by Lemma \ref{le:2.2.}, the largest root of $g_{Q_{D}(G^{\prime})}(x)=0$ equals $\eta_{1}(G^{\prime})$.
	
	According to the definition of the Wiener index, we have
	\begin{align*}
		W(G^{\prime})=&\sum\limits_{i<j}d_{ij}({G}^{\prime})
		\\=&\frac{1}{2}\delta(\delta-1)+[n-(b+1)\delta+bk-1]\delta+(b\delta-bk+1)\delta+(b\delta-bk+1)(b\delta-bk)
		\\&+\frac{1}{2}[n-(b+1)\delta+bk-1][n-(b+1)\delta+bk-2]
		\\&+2[n-(b+1)\delta+bk-1](b\delta-bk+1)
		\\=&\frac{1}{2}[n^2+(2b\delta-2bk+1)n-(b^2+2b)\delta^2+((2b^2+2b)k-3b-2)\delta-b^2k^2+3bk-2].
	\end{align*}
	By Lemma 2.7, since $n\geq2(b^2+2b)\delta^2+2\delta+2b^2k^2$, we get
	\begin{align*}
		\eta_{1}(G^{\prime})\geq&\frac{4W(G^{\prime})}{n}	
		\\=&2[n+2b\delta-2bk+1-\frac{(b^2+2b)\delta^2-((2b^2+2b)k-3b-2)\delta+b^2k^2-3bk+2}{n}]
		\\=&2n+4b\delta-4bk+1+\frac{n-2b(b+2)\delta^2+2[b(2b+2)k-3b-2]\delta-2b^2k^2+6bk-4}{n}
		\\\geq&2n+4b\delta-4bk+1+\frac{2[2kb^2+(2k-3)b-1]\delta+6bk-4}{n}
		\\\geq&2n+4b\delta-4bk+1+\frac{2(2k^3+2k^2-3k-1)\delta+6k^2-4}{n}
		\\>&2n+4b\delta-4bk+1.
	\end{align*}

	In order to prove $\eta_{1}(G_{2})>\eta_{1}(G^{\prime})$, it suffices to show $g_{Q_{D}(G_2)}(\eta_{1}(G^{\prime}))<0$. Note that $g_{Q_{D}(G^{\prime})}(\eta_{1}(G^{\prime}))=0$. Hence,
	\begin{align*}
		g_{Q_{D}(G_2)}(\eta_1(G^{\prime}))=&g_{Q_{D}(G_2)}(\eta_1(G^{\prime}))-g_{Q_{D}(G^{\prime})}(\eta_1(G^{\prime}))
		\\=&(\delta-s)[ (2b-1)\eta_{1}^{2}(G^{\prime})-((2b-3)n+(4b^2+4b)(s+\delta)-(4b+8b^2)k+8)
		\\&\cdot \eta_{1}(G^{\prime})+2b^2s^2+((4b^2+4b)n+2b^2\delta-4b^2k-8b^2-6b)s-2n^2
		\\&+((4b^2+4b)\delta-(4b+8b^2)k+4b+10)n+2b^2k^2-(4b^2\delta-16b^2-6b)k
		\\&+2b^2\delta^2-(6b+8b^2)\delta-8b-12].
	\end{align*}
	Let $w(x)=(2b-1)x^{2}-[(2b-3)n+(4b^2+4b)(s+\delta)-(4b+8b^2)k+8] x+2b^2s^2+[(4b^2+4b)n+2b^2\delta-4b^2k-8b^2-6b]s-2n^2+[(4b^2+4b)\delta-(4b+8b^2)k+4b+10]n+2b^2k^2-(4b^2\delta-16b^2-6b)k+2b^2\delta^2-(6b+8b^2)\delta-8b-12$
	be a real function in $x$ with $x\in[2n+4b\delta-4bk+1,+\infty)$. Then the axis of symmetry of $w(x)$ is
	\begin{align*}
		x=\frac{(2b-3)n+(4b^2+4b)(s+\delta)-(4b+8b^2)k+8}{2(2b-1)}.
	\end{align*}
	Since $\delta\geq k+1$ and $n\geq(b+1)s-bk+2$, we have
	\begin{align*}
		&2(2b-1)(2n+4b\delta-4bk+1)-[(2b-3)n+(4b^2+4b)(s+\delta)-(4b+8b^2)k+8]
		\\=&(6b-1)n+(12b^2-12b)\delta-(8b^2-12b)k-(4b^2+4b)s+4b-10
		\\\geq&(2b-1)n+(12b^2-12b)\delta-(8b^2-12b)k-(4b^2+4b)s+4b-10+4b[(b+1)s-bk+2]
		\\=&(2b-1)n+12b(b-1)\delta-12b(b-1)k+12b-10
		\\\geq&(2b-1)n+12b^2-10
		\\>&0.
	\end{align*}
	Consequently, $w(x)$ is monotonically increasing in the interval $[2n+4b\delta-4bk+1,+\infty).$
	We have
	\begin{align*}
		w(\eta_1(G^{\prime}))\geq& w(2n+4b\delta-4bk+1)
		\\=&2b^2s^2-[(4b+4b^2)n+(14b^2+16b^3)\delta-(12b^2+16b^3)k+12b^2+10b]s
		\\&+4bn^2+[(20b^2-8b)\delta-(16b^2-8b)k+10b-7]n-30b^2k^2+(8b^2+50b)k
		\\&+(16b^3-30b^2)\delta^2+[(60b^2-16b^3)k+4b^2-50b]\delta-6b-21.
	\end{align*}
	Let $w_{1}(x)=2b^2x^2-[(4b+4b^2)n+(14b^2+16b^3)\delta-(12b^2+16b^3)k+12b^2+10b]x+4bn^2+[(20b^2-8b)\delta-(16b^2-8b)k+10b-7]n-30b^2k^2+(8b^2+50b)k+(16b^3-30b^2)\delta^2+[(60b^2-16b^3)k+4b^2-50b]\delta-6b-21$ be a real function in $x$ with $x\in[\delta+1,\frac{n+bk-2}{b+1}].$
	It is routine to check that the axis of symmetry of $w_{1}(x)$ is
	\begin{align*}
		x=\frac{(4b+4b^2)n+(14b^2+16b^3)\delta-(12b^2+16b^3)k+12b^2+10b}{4b^2}.
	\end{align*}
	By some calculations, we get
	$$\frac{(4b+4b^2)n+(14b^2+16b^3)\delta-(12b^2+16b^3)k+12b^2+10b}{4b^2}>\frac{n+bk-2}{b+1}.$$
	Then $w_{1}(x)$ is monotonically decreasing in $x$ with $x\in[\delta+1,\frac{n+bk-2}{b+1}]$. Therefore,
	\begin{align*}
		w_1(s)\geq&w_1(\frac{n+bk-2}{b+1})
		\\=&\frac{1}{(b+1)^{2}}[2b^2n^2+((4b^4+2b^3-10b^2-8b)\delta+(-4b^4+8b^2+8b)k+6b^3-b^2-6b-7)
		\\&\cdot n+(16b^5+2b^4-44b^3-30b^2)\delta^2+((-32b^5-2b^4+90b^3+60b^2)k+36b^4+18b^3
		\\&-68b^2-50b)\delta+(16b^5-48b^3-30b^2)k^2+(-36b^4-20b^3+74b^2+50b)k+18b^3
		\\&+19b^2-28b-21].
	\end{align*}
	Let $w_2(x)=2b^2x^2+[(4b^4+2b^3-10b^2-8b)\delta+(-4b^4+8b^2+8b)k+6b^3-b^2-6b-7]x+(16b^5+2b^4-44b^3-30b^2)\delta^2+[(-32b^5-2b^4+90b^3+60b^2)k+36b^4+18b^3-68b^2-50b]\delta+(16b^5-48b^3-30b^2)k^2+(-36b^4-20b^3+74b^2+50b)k+18b^3+19b^2-28b-21$ be a real function in $x$ with $x\in[2(b^2+2b)\delta^2+2\delta+2b^2k^2,+\infty)$. We may obtain the derivative function of $w_2(x)$ is
	\begin{align*}
		w_{2}^{\prime}(x)=4b^2x+(4b^4+2b^3-10b^2-8b)\delta+(-4b^4+8b^2+8b)k+6b^3-b^2-6b-7
	\end{align*}
	and
	\begin{align*}
		w_{2}^{\prime}(x)\geq&4b^2[2(b^2+2b)\delta^2+2\delta+2b^2k^2]+(4b^4+2b^3-10b^2-8b)\delta+(-4b^4+8b^2+8b)k\\&+6b^3-b^2-6b-7
		\\=&(8b^4+16b^3)\delta^2+(4b^4+2b^3-2b^2-8b)\delta+8b^4k^2+(-4b^4+8b^2+8b)k
		\\&+6b^3-b^2-6b-7
		\\\geq&(8b^4+16b^3)(k+1)^2+(4b^4+2b^3-2b^2-8b)(k+1)+8b^4k^2+(-4b^4+8b^2+8b)k
		\\&+6b^3-b^2-6b-7
		\\=&16(b^4+b^3)k^2+(16b^4+34b^3+6b^2)k+12b^4+24b^3-3b^2-14b-7
		\\>&0.
	\end{align*}
	Clearly, $w_{2}(x)$ is montonically increasing in the interval $[2(b^2+2b)\delta^2+2\delta+2b^2k^2,+\infty)$. Thus,
	\begin{align*}
		w_2(n)\geq&w_2(2(b^2+2b)\delta^2+2\delta+2b^2k^2)
		\\=&(8b^6+32b^5+32b^4)\delta^4+(8b^6+20b^5+4b^4-24b^3-32b^2)\delta^3+[(16b^6+32b^5)k^2
		\\&+(-8b^6-16b^5+16b^4+48b^3+32b^2)k+28b^5+32b^4-56b^3-80b^2-44b]\delta^2
		\\&[(8b^6+4b^5-4b^4-16b^3)k^2+(-32b^5-10b^4+90b^3+76b^2+16b)k+36b^4+30b^3
		\\&-70b^2-62b-14]\delta+8b^6k^4+(-8b^6+16b^4+16b^3)k^3+(28b^5-2b^4-60b^3-44b^2)
		\\&\cdot k^2+(-36b^4-20b^3+74b^2+50b)k+18b^3+19b^2-28b-21
		\\\geq&(8b^6+32b^5+32b^4)(k+1)^4+(8b^6+20b^5+4b^4-24b^3-32b^2)(k+1)^3
		\\&+[(16b^6+32b^5)k^2+(-8b^6-16b^5+16b^4+48b^3+32b^2)k+28b^5+32b^4-56b^3
		\\&-80b^2-44b](k+1)^2+[(8b^6+4b^5-4b^4-16b^3)k^2+(-32b^5-10b^4+90b^3+76b^2
		\\&+16b)k+36b^4+30b^3-70b^2-62b-14](k+1)+8b^6k^4+(-8b^6+16b^4+16b^3)k^3
		\\&+(28b^5-2b^4-60b^3-44b^2)k^2+(-36b^4-20b^3+74b^2+50b)k+18b^3+19b^2-28b
		\\&-21
		\\=&(32k^4+64k^3+80k^2+48k+16)b^6+(64k^4+200k^3+280k^2+196k+80)b^5
		\\&+(32k^4+160k^3+252k^2+210k+104)b^4+(24k^3-18k^2-36k-32)b^3
		\\&-(80k^2+144k+163)b^2-(28k^2+84k+134)b-14k-35
		\\\geq&32k^{10}+128k^9+312k^8+488k^7+488k^6+272k^5-12k^4-204k^3-247k^2-148k
		\\&-35
		\\>&0.
	\end{align*}
	
	Therefore, we can conclude that $w(\eta_1(G^{\prime}))>0$ and $g_{Q_{D}(G_2)}(\eta_1(G^{\prime}))<0$. Hence, $\eta_1(G_2)>\eta_{1}(G^{\prime})$. Combining $\eta_1(G)\geq\eta_1(G_1)\geq\eta_1(G_2)$, we may conclude that $\eta_1(G)>\eta_1(G^{\prime})$, a contradiction to the condition.\\

	\textbf{Case 2.} $s=\delta$.

	By Lemma \ref{le:2.9.}, we have $\eta_{1}(G_1)\geq \eta_1(K_{\delta}\vee(K_{n-(b+1)\delta+bk-1}\cup (b\delta-bk+1)K_{1}))$
	with equality if and only if $G_1\cong K_{\delta}\vee(K_{n-(b+1)\delta+bk-1}\cup (b\delta-bk+1)K_{1})$. Combining $\eta_1(G)\geq\eta_1(G_1)$, we get $\eta_{1}(G)\geq \eta_1(K_{\delta}\vee(K_{n-(b+1)\delta+bk-1}\cup (b\delta-bk+1)K_{1}))$ with equality if and only if $G\cong K_{\delta}\vee(K_{n-(b+1)\delta+bk-1}\cup (b\delta-bk+1)K_{1})$. Observe that $K_{\delta}\vee(K_{n-(b+1)\delta+bk-1}\cup (b\delta-bk+1)K_{1})$ is not $k$-critical with respect to $[1,b]$-odd factors, a contradiction.\\

	\textbf{Case 3.} $s\leq\delta-1$.
	
	Recall that $G$ is a spanning subgraph of $G_1=K_{s}\vee(K_{n_{1}}\cup K_{n_{2}}\cup\cdots\cup K_{n_{bs-bk+2}})$ where $n_{1}\geq n_{2}\geq\cdots\geq n_{bs-bk+2}$ and $\sum\limits_{i=1}^{bs-bk+2}n_{i}=n-s$. Clearly, $s+n_{bs-bk+2}-1\geq\delta$ because the minimum degree of $G_1$ is at least $\delta$. Combining with $s\geq k+1$ and $b\geq k$, we have $(bs-bk+2)-s-1=(b-1)s-bk+1\geq(b-1)(k+1)-bk+1=b-k\geq0.$ We assert that $n_{1}\geq5(\delta-s+1).$ Suppose to the contrary that $n_{1}\leq(5\delta-5s+4).$ Notice that $n_{1}\geq n_{2}\geq\cdots\geq n_{bs-bk+2}\geq \delta+1-s$, we have
	\begin{align*}
		n=&s+n_{1}+ n_{2}+\cdots+ n_{bs-bk+2}\\
		\leq& s+(5\delta-5s+4)(bs-bk+2)\\
		=&-5bs^2+(5b\delta+5bk+4b-9)s-5bk\delta+10\delta-4bk+8
		\\\leq&-5b(\frac{5b\delta+5bk+4b-9}{10b})^2+\frac{(5b\delta+5bk+4b-9)^{2}}{10b}-5bk\delta+10\delta-4bk+8
		\\=&\frac{5}{4}b\delta^2+(-\frac{5}{2}bk+2b+\frac{11}{2})\delta+\frac{5}{4}bk^2-2bk-\frac{9}{2}k+\frac{4}{5}b+\frac{81}{20b}+\frac{22}{5}
	\end{align*}
	and
	\begin{align*}
		&2(b^2+2b)\delta^2+2\delta+2b^2k^2-[\frac{5}{4}b\delta^2+(-\frac{5}{2}bk+2b+\frac{11}{2})\delta+\frac{5}{4}bk^2-2bk-\frac{9}{2}k+\frac{4}{5}b+\frac{81}{20b}
		\\&+\frac{22}{5}]\\
		=&(2b^2+\frac{11}{4}b)\delta^2+(\frac{5}{2}bk-2b-\frac{7}{2})\delta+2b^2k^2-\frac{5}{4}bk^2+2bk+\frac{9}{2}k-\frac{4}{5}b-\frac{81}{20b}-\frac{22}{5} \\
		\geq&(4k^2+4k+2)b^2+(4k^2+8k-\frac{1}{20})b+k-\frac{81}{20b}-\frac{79}{10}
		\\\geq&4k^4+8k^3+10k^2+\frac{19}{20}k-\frac{81}{20k}-\frac{79}{10}
		\\>&0.
	\end{align*}
	This contradicts to the condition of $n$.
	
	Let $G_3=K_s\vee(K_{n-s-(\delta+1-s)(bs-bk+1)}\cup(bs-bk+1)K_{\delta+1-s}).$ By Lemma \ref{le:2.10.}, we have $\eta_1(G_1)\geq\eta_1(G_3)$
	with equality if and only if $(n_1,n_2,\ldots,n_{bs-bk+2} )=(n-s-(\delta+1-s)(bs-bk+1),\delta+1-s,\ldots,\delta+1-s)$.

	Consider the partition $V(G_3)=V(K_s)\cup V(K_{n-s-(\delta+1-s)(bs-bk+1)})\cup V((bs-bk+1)K_{\delta+1-s})$, the corresponding quotient matrix of $Q_{D}(G_3)$ equals
	\begin{align*}
		\left(\begin{matrix}	
			n+s-2 & n-s-(\delta+1-s)(bs-bk+1) & (bs-bk+1)(\delta+1-s) \cr
			s &2n-s-2 & 2(bs-bk+1)(\delta+1-s) \cr
			s & 2(n-s-(\delta+1-s)(bs-bk+1)) & 2(n-1+(bs-bk)(\delta+1-s))-s \cr
		\end{matrix}\right).
	\end{align*}
	Then we obtain that the characteristic polynomial of the corresponding quotient matrix of $Q_{D}(G_3)$ is given by $g_{Q_{D}(G_3)}(x)$, where
	\begin{align*}
		g_{Q_{D}(G_3)}(x)=&x^3+[2bs^2-(2b\delta+2bk+2b-1)s-5n+2bk\delta+2bk+6]x^2+[4b^2s^4+(4b
		\\&-8b^2-8b^2k-8b^2\delta)s^3+(4b^2\delta^2+(16b^2k+8b^2-12b)\delta-2bn+4b^2k^2+(16b^2
		\\&-4b)k+4b^2-4b)s^2+((2bk+2b\delta+2b+1)n-8b^2k^2+(4b-8b^2)k+(8b
		\\&-8b^2k)\delta^2+(-8b^2k^2+(-16b^2+12b)k+8b-4)\delta)s+8n^2-((2bk+4)\delta
		\\&+2bk+24)n+4b^2k^2+(4b^2k^2-8bk+4)\delta^2+(8b^2k^2-8bk+8)\delta+16]x
		\\&- 2b^2s^5+(-4b^2n+4b^2\delta+4b^2k+12b^2-2b)s^4+[(8b^2\delta+8b^2k+8b^2-4b)n
		\\&-2b^2\delta^2-(8b^2k+20b^2-4b)\delta-2b^2k^2+(-24b^2+2b)k-18b^2+12b]s^3
		\\&+[(-4b^2\delta^2+(-16b^2k-8b^2+12b)\delta-4b^2k^2+(-16b^2+4b)k-4b^2+8b)n
		\\&+(4b^2k+8b^2-2b)\delta^2+(4b^2k^2+(40b^2-4b)k+16b^2-28b)\delta+12b^2k^2+(36b^2
		\\&-12b)k+8b^2-18b]s^2+[-2n^2+((8b^2k-8b)\delta^2+(8b^2k^2+(16b^2-12b)k
		\\&-12b+4)\delta+8b^2k^2+(8b^2-8b)k-4b+6)n+(-2b^2k^2+(-16b^2+2b)k
		\\&+16b)\delta^2+(-20b^2k^2+(-32b^2+28b)k+24b-8)\delta-18b^2k^2+(-16b^2+18b)k
		\\&+8b-4]s-4n^3+(4\delta+20)n^2+[(-4b^2k^2+8bk-4)\delta^2+(-8b^2k^2+12bk
		\\&-16)\delta-4b^2k^2+4bk-32]n+(8b^2k^2-16bk+8)\delta^2+(16b^2k^2-24bk+16)\delta
		\\&+8b^2k^2-8bk+16.
	\end{align*}
	Since the partition $V(G_3)=V(K_s)\cup V(K_{n-s-(\delta+1-s)(bs-bk+1)})\cup V((bs-bk+1)K_{\delta+1-s})$ is equitable, by Lemma \ref{le:2.2.}, the largest root of $g_{Q_{D}(G_3)}(x)=0$ equals $\eta_{1}(G_3)$.
	
	Let $G^{\prime}=K_\delta \vee(K_{n-(b+1)\delta+bk-1}\cup(b\delta -bk+1)K_1)$.
	Now consider the partition $V(G^{\prime})=V(K_\delta)\cup V(K_{n-(b+1)\delta+bk-1})\cup V((b\delta-bk+1)K_1)$, the corresponding quotient matrix of $Q_{D}(G^{\prime})$ equals
	\begin{align*}
		\left(\begin{matrix}	
			n+\delta-2 & n-(b+1)\delta+bk-1 & b\delta-bk+1 \cr
			\delta & 2n-\delta-2 & 2(b\delta-bk+1) \cr
			\delta & 2(n-(b+1)\delta+bk-1) & 2(n-bk-1)+(2b-1)\delta \cr
		\end{matrix}\right).
	\end{align*}
	Then the characteristic polynomial of the corresponding quotient matrix of $Q_{D}(G^{\prime})$ is given by $g_{Q_{D}(G^{\prime})}(x),$ where
	\begin{align*}
		g_{Q_{D}(G^{\prime})}(x)=&x^3+[(1-2b)\delta-5n+2bk+6]x^2+[(4b^2+4b)\delta^2+((2b-3)n-(8b^2+4b)k
		\\&+8)\delta+8n^{2}-(2bk+24)n+4b^2k^2+16]x-2b^2\delta^3+[-(4b^2+4b)n+4b^2k+8b^2
		\\&+6b]\delta^2+[2n^2+((8b^2+4b)k-4b-10)n-2b^2k^2-(16b^2+6b)k+8b+12]\delta
		\\&-4n^3+20n^2-(4b^2k^2-4bk+32)n+8b^2k^2-8bk+16.
	\end{align*}
	Observe that the partition $V(G^{\prime})=V(K_\delta)\cup V(K_{n-(b+1)\delta+bk-1})\cup V((b\delta-bk+1)K_1)$ is equitable, by Lemma \ref{le:2.2.}, the largest root of $g_{Q_{D}(G^{\prime})}(x)=0$ equals $\eta_{1}(G^{\prime})$. 
	
	In order to prove $\eta_{1}(G_{3})>\eta_{1}(G^{\prime})$, it suffices to show $g_{Q_{D}(G_3)}(\eta_{1}(G^{\prime}))<0$. Note that $g_{Q_{D}(G^{\prime})}(\eta_{1}(G^{\prime}))=0$. Hence,
	\begin{align*}
		g_{Q_{D}(G_3)}(\eta_1(G^{\prime}))=&g_{Q_{D}(G_3)}(\eta_1(G^{\prime}))-g_{Q_{D}(G^{\prime})}(\eta_1(G^{\prime}))
		\\=&(s-\delta)[ (2bs-2bk-2b+1)\eta_{1}^{2}(G^{\prime})+((2bk+2b+1)n+4b^2s^3-(4b^2\delta
		\\&+8b^2k+8b^2-4b)s^2+((8b^2k-8b)\delta-2bn+4b^2k^2+(16b^2-4b)k+4b^2
		\\&-4b)s+(-4b^2k^2+8bk+4b^2+4b-4)\delta-8b^2k^2+(-8b^2+4b)k)\eta_{1}(G^{\prime})
		\\&-2b^2s^4+(-4b^2n+2b^2\delta+4b^2k+12b^2-2b)s^3+((4b^2\delta+8b^2k+8b^2
		\\&-4b) n+(-4b^2k-8b^2+2b)\delta-2b^2k^2+(-24b^2+2b)k-18b^2+12b)s^2
		\\&+(((-8b^2k+8b)\delta-4b^2k^2+(-16b^2+4b)k-4b^2+8b)n+(2b^2k^2
		\\&+(16b^2-2b)k-2b^2-16b)\delta+12b^2k^2+(36b^2-12b)k+8b^2-18b)s
		\\&-2n^2+((4b^2k^2-8bk-4b^2-4b+4)\delta+8b^2k^2+(8b^2-8b)k-4b+6)n
		\\&-18b^2k^2-(16b^2-18b)k-2b^2\delta^2+(-8b^2k^2+(4b^2+16b)k+8b^2+6b
		\\&-8)\delta+8b-4].
	\end{align*}
	Let $z(x)=(2bs-2bk-2b+1)x^{2}+[(2bk+2b+1)n+4b^2s^3-(4b^2\delta+8b^2k+8b^2-4b)s^2+((8b^2k-8b)\delta-2bn+4b^2k^2+(16b^2-4b)k+4b^2-4b)s+(-4b^2k^2+8bk+4b^2+4b-4)\delta-8b^2k^2+(-8b^2+4b)k]x-2b^2s^4+(-4b^2n+2b^2\delta+4b^2k+12b^2-2b)s^3+[(4b^2\delta+8b^2k+8b^2-4b)n+(-4b^2k-8b^2+2b)\delta-2b^2k^2+(-24b^2+2b)k-18b^2+12b]s^2+[((-8b^2k+8b)\delta-4b^2k^2+(-16b^2+4b)k-4b^2+8b)n+(2b^2k^2+(16b^2-2b)k-2b^2-16b)\delta+12b^2k^2+(36b^2-12b)k+8b^2-18b]s-2n^2+[(4b^2k^2-8bk-4b^2-4b+4)\delta+8b^2k^2+(8b^2-8b)k-4b+6]n
	-18b^2k^2-(16b^2-18b)k-2b^2\delta^2+[-8b^2k^2+(4b^2+16b)k+8b^2+6b
	-8]\delta+8b-4$
	be a real function in $x$ with $x\in[2n+4b\delta-4bk+1,+\infty)$. Then the axis of symmetry of $z(x)$ is
	\begin{align*}
		x=&-\frac{(2bk+2b+1)n+4b^2s^3-(4b^2\delta+8b^2k+8b^2-4b)s^2+((8b^2k-8b)\delta-2bn+4b^2k^2}{2(2bs-2bk-2b+1)}
		\\&\dfrac{+(16b^2-4b)k+4b^2-4b)s+(-4b^2k^2+8bk+4b^2+4b-4)\delta-8b^2k^2+(-8b^2+4b)k}{2(2bs-2bk-2b+1)}.
	\end{align*}
	Since $k+1\leq s \leq\delta -1$ and $n\geq\frac{6}{5}b^2\delta^3+\frac{8}{5}kb^2\delta^2$, we have
	\begin{align*}
		&2(2bs-2bk-2b+1)(2n+4b\delta-4bk+1)+[(2bk+2b+1)n+4b^2s^3-(4b^2\delta+8b^2k+8b^2
		\\&-4b)s^2+((8b^2k-8b)\delta-2bn+4b^2k^2+(16b^2-4b)k+4b^2-4b)s+(-4b^2k^2+8bk+4b^2
		\\&+4b-4)\delta-8b^2k^2+(-8b^2+4b)k]
		\\=&4b^2s^3-(4b^2\delta+8b^2k+8b^2-4b)s^2+[(8b^2k+16b^2-8b)\delta+6bn+4b^2k^2-4bk+4b^2]s+(5
		\\&-6b-6bk)n-[4b^2k^2+(16b^2-8b)k+12b^2-12b+4]\delta+8b^2k^2+(8b^2-8b)k-4b+2
		\\\geq&4b^2(k+1)^3-(4b^2\delta+8b^2k+8b^2-4b)(\delta-1)^2+[(8b^2k+16b^2-8b)\delta+6bn+4b^2k^2-4bk		\\&+4b^2](k+1)+(5-6b-6bk)n-[4b^2k^2+(16b^2-8b)k+12b^2-12b+4]\delta+8b^2k^2
		\\&+(8b^2-8b)k-4b+2
		\\=&5n-4b^2\delta^3-(8b^2k-4b)\delta^2+(4b^2k^2+24b^2k+16b^2-4b-4)\delta+8b^2k^3+(24b^2-4b)k^2
		\\&+(16b^2-12b)k+2
		\\\geq&5(\frac{6}{5}b^2\delta^3+\frac{8}{5}kb^2\delta^2)-4b^2\delta^3-(8b^2k-4b)\delta^2+(4b^2k^2+24b^2k+16b^2-4b-4)\delta+8b^2k^3
		\\&+(24b^2-4b)k^2+(16b^2-12b)k+2
		\\=&2b^2\delta^3+4b\delta^2+(4b^2k^2+24b^2k+16b^2-4b-4)\delta+8b^2k^3+(24b^2-4b)k^2+(16b^2-12b)k
		\\&+2
		\\>&0.
	\end{align*}
	Consequently, $z(x)$ is monotonically increasing in the interval $[2n+4b\delta-4bk+1,+\infty).$ We get
	\begin{align*}
		z(\eta_1(G^{\prime}))\geq& z(2n+4b\delta-4bk+1)
		\\=&-2b^2s^4+[4b^2n+(16b^3+2b^2)\delta+(4b^2-16b^3)k+16b^2-2b]s^3-[(4b^2\delta+8b^2k
		\\&+8b^2-4b)n+16b^3b^2+((16b^3+4b^2)k+32b^3-4b^2-2b)\delta-(32b^3-2b^2)k^2
		\\&-(32b^3-48b^2+2b)k+26b^2-16b]s^2+[4bn^2+((8b^2k+24b^2-8b)\delta+4b^2k^2
		\\&-(8b^2+4b)k+4b^2+6b)n+(32b^3k+32b^3-32b^2)\delta^2+((-16b^3+2b^2)k^2
		\\&+(40b^2-2b)k+16b^3-2b^2-24b)\delta-16b^3k^3+(32b^2-32b^3)k^2-(16b^3-52b^2
		\\&+12b^2-20b]s-(4bk+4b-4)n^2+[(-4b^2k^2+(-24b^2+8b)k-20b^2+24b-4)
		\\&\cdot\delta+16b^2k^2+(16b^2-26b)k-10b+11]n-[16b^3k^2+(32b^3-32b^2)k+16b^3
		\\&-30b^2+16b]\delta^2+[16b^3k^3+(32b^3-44b^2)k^2+(16b^3-44b^2+40b)k-4b^2+18b
		\\&-12]\delta-10b^2k^2+(-8b^2+12b)k+6b-3
		\\\geq&-2b^2(\delta-1)^4+[4b^2n+(16b^3+2b^2)\delta+(4b^2-16b^3)k+16b^2-2b](k+1)^3
		\\&-[(4b^2\delta+8b^2k+8b^2-4b)n+16b^3b^2+((16b^3+4b^2)k+32b^3-4b^2-2b)\delta
		\\&-(32b^3-2b^2)k^2-(32b^3-48b^2+2b)k+26b^2-16b](\delta-1)^2+[4bn^2+((8b^2k
		\\&+24b^2-8b)\delta+4b^2k^2-(8b^2+4b)k+4b^2+6b)n+(32b^3k+32b^3-32b^2)\delta^2
		\\&+((-16b^3+2b^2)k^2+(40b^2-2b)k+16b^3-2b^2-24b)\delta-16b^3k^3+(32b^2-32b^3)
		\\&\cdot k^2-(16b^3-52b^2+16b)k+12b^2-20b](k+1)-(4bk+4b-4)n^2+[(-4b^2k^2
		\\&+(-24b^2+8b)k-20b^2+24b-4)\delta+16b^2k^2+(16b^2-26b)k-10b+11]n
		\\&-[16b^3k^2+(32b^3-32b^2)k+16b^3-30b^2+16b]\delta^2+[16b^3k^3+(32b^3-44b^2)k^2
		\\&+(16b^3-44b^2+40b)k-4b^2+18b-12]\delta-10b^2k^2+(-8b^2+12b)k+6b-3
		\\=&4n^2+[-4b^2\delta^3+(-8b^2k+4b)\delta^2+(4b^2k^2+24b^2k+16b^2+8b-4)\delta+8b^2k^3
		\\&+(24b^2-4b)k^2+(16b^2-24b)k+11]n-(16b^3+2b^2)\delta^4-[(16b^3+4b^2)k-12b^2
		\\&-2b]\delta^3+[(48b^3-2b^2)k^2+(96b^3-40b^2+2b)k+64b^3-48b^2-4b]\delta^2+[(16b^3
		\\&+4b^2)k^3+(8b^2-2b)k^2+(92b^2+10b)k+60b^2-36b-12]\delta+(-32b^3+4b^2)k^4
		\\&+(-96b^3+60b^2-2b)k^3+(-64b^3+132b^2-22b)k^2+(60b^2-28b)k-3.
	\end{align*}
	Let $z_{1}(x)=4x^2+[-4b^2\delta^3+(-8b^2k+4b)\delta^2+(4b^2k^2+24b^2k+16b^2+8b-4)\delta+8b^2k^3+(24b^2-4b)k^2+(16b^2-24b)k+11]x-(16b^3+2b^2)\delta^4-[(16b^3+4b^2)k-12b^2-2b]\delta^3+[(48b^3-2b^2)k^2+(96b^3-40b^2+2b)k+64b^3-48b^2-4b]\delta^2+[(16b^3+4b^2)k^3+(8b^2-2b)k^2+(92b^2+10b)k+60b^2-36b-12]\delta+(-32b^3+4b^2)k^4+(-96b^3+60b^2-2b)k^3+(-64b^3+132b^2-22b)k^2+(60b^2-28b)k-3$ be a real function in $x$ with $x\in[\frac{6}{5}b^2\delta^3+\frac{8}{5}kb^2\delta^2,+\infty).$
	We may obtain the derivative function of $z_1(x)$ is
	\begin{align*}
		z_{1}^{\prime}(x)=&8x-4b^2\delta^3+(-8b^2k+4b)\delta^2+(4b^2k^2+24b^2k+16b^2+8b-4)\delta+8b^2k^3
		\\&+(24b^2-4b)k^2+(16b^2-24b)k+11
		\\\geq&8(\frac{6}{5}b^2\delta^3+\frac{8}{5}kb^2\delta^2)-4b^2\delta^3+(-8b^2k+4b)\delta^2+(4b^2k^2+24b^2k+16b^2+8b-4)\delta
		\\&+8b^2k^3+(24b^2-4b)k^2+(16b^2-24b)k+11
		\\=&\frac{28}{5}b^2\delta^3+(\frac{24}{5}kb^2+4b)\delta^2+(4b^2k^2+24b^2k+16b^2+8b-4)\delta+8b^2k^3+(24b^2-4b)k^2
		\\&+(16b^2-24b)k+11
		\\\geq&\frac{1}{5}(112k^3+392k^2+388k+108)b^2-(8k-12)b-4k+7
		\\\geq&\frac{1}{5}(112k^5+392k^4+388k^3+68k^2+40k+35)
		\\>&0.
	\end{align*}
	Therefore,  $z_{1}(x)$ is montonically increasing in the interval $[\frac{6}{5}b^2\delta^3+\frac{8}{5}kb^2\delta^2,+\infty)$. We get
	\begin{align*}
		z_1(n)\geq&z_1(\frac{6}{5}b^2\delta^3+\frac{8}{5}kb^2\delta^2)
		\\=&\frac{1}{25}[24b^4\delta^6+(120b^3-16kb^4)\delta^5+(56b^4k^2+(720b^4+160b^3)k+480b^4-160b^3-170b^2)
		\\&\cdot\delta^4+(400b^4k^3+(1680b^4-120b^3)k^2+(1120b^4-800b^3-260b^2)k+630b^2+50b)\delta^3
		\\&+(320b^4k^4+(960b^4-160b^3)k^3+(640b^4+240b^3-50b^2)k^2+(2400b^3-560b^2+50b)
		\\&\cdot k+1600b^3-1200b^2-100b)\delta^2+((400b^3+100b^2)k^3+(200b^2-50b)k^2+(2300b^2
		\\&+250b)k+1500b^2-900b-300)\delta+(-800b^3+100b^2)k^4-(2400b^3-1500b^2+50b)k^3
		\\&-(1600b^3-3300b^2+550b)k^2+(1500b^2-700b)k-75]
		\\\geq&\frac{1}{25}[(784k^6+5488k^5+14136k^4+17824k^3+11776k^2+3888k+504)b^4-(560k^4
		\\&+520k^3-3400k^2-4920k-1560)b^3-(280k^4-310k^3-3520k^2-3290k-760)b^2
		\\&-(200k^2+1350k+950)b-300k-375]
		\\>&0.
	\end{align*}

	Therefore, we can conclude that $z(\eta_1(G^{\prime}))>0$ and $g_{Q_{D}(G_3)}(\eta_1(G^{\prime}))<0$. Hence, $\eta_1(G_3)>\eta_{1}(G^{\prime})$. Combining $\eta_1(G)\geq\eta_1(G_1)\geq\eta_1(G_3)$, we may conclude that $\eta_1(G)>\eta_1(G^{\prime})$, a contradiction to the condition.
	
	This completes the proof.
	$\hfill\square$\\

	\textbf{Declaration of competing interest}
	
	The authors declare that they have no known competing financial interests or personal relationships that could have appeared to influence the work reported in this paper.

	\textbf{Date availability}
	
	No date was used for the research described in the article.

\end{document}